\documentclass[11pt,a4paper]{article}
\usepackage{amsfonts}
\usepackage{latexsym}
\usepackage{cite}
\usepackage{amsmath,amsfonts,latexsym,amssymb}
\usepackage[mathscr]{eucal}
\usepackage{cases,color}
\usepackage{amsthm}
\usepackage[bf,small]{caption2}
\usepackage{float}
\usepackage{graphicx}
\usepackage{amsmath}
\usepackage{amssymb}
\usepackage[all]{xy}

\newtheorem{theorem}{theorem}[section]

\newtheorem{cor}[theorem]{Corollary}
\newtheorem{conv}[theorem]{Convention}
\newtheorem{defn}[theorem]{Definition}
\newtheorem{exmp}[theorem]{Example}
\newtheorem{lem}[theorem]{Lemma}
\newtheorem{nota}[theorem]{Notation}

\newtheorem{rmk}[theorem]{Remark}
\newtheorem{thm}[theorem]{Theorem}

\begin{document}

\title{\vspace{-2cm}\textbf{Computing Alexander polynomials for arborescent links}}
\author{\Large Haimiao Chen}

\date{}
\maketitle

\begin{abstract}
  Alexander polynomial was born one century ago, but explicit formulas have been found for only a few families of links.
  In this paper, we present an efficient method of computing Alexander polynomial for arborescent links.
  Applying this method, we express the Alexander polynomials of Montesinos links in terms of certain polynomials associated to rational tangles which can be computed recursively. Specifically, we deduce explicit closed formulas for all pretzel links.

  \medskip
  \noindent {\bf Keywords:} Alexander polynomial; arborescent link; Montesinos link; pretzel link; explicit closed formula  \\
  {\bf MSC2020:} 57K10, 57K31
\end{abstract}

\section{Introduction}

Alexander polynomial is a traditional knot invariant, dating back to 1928 \cite{Al28}, and has been found to be widely useful in knot theory.
However, until now it has been explicitly computed for only a few families of links.
The formula for torus knots are well-known.
An elegant formula for 2-bridge knots was found by Hartley \cite{Ha79}, and extended to links by Hoste \cite{Ho20}. Alexander polynomials of 2-bridge links were also studied by Kanenobu \cite{Ka84}. Formulas for some pretzel links were obtained in \cite{BL20,Hi01,KL07,Na86}; in particular, a finite procedure of computation was developed in \cite{Na86}. Very recently, Belousov \cite{Be25} deduced explicit formulas for all pretzel knots.

In this paper, we propose an efficient and easy-to-grasp method for computing the multi-variable Alexander polynomials of arborescent links, which form an interesting class including 2-bridge links and pretzel links. For arborescent links, we prefer combinatorial notations to the traditional one using trees \cite{BS16}. A step-by-step method was proposed by Hirasawa and Murasugiin \cite{HM19}, based on manipulating Seifert matrices, but it has not been applied to deduce concrete results. Differing from the usual approaches of Seifert matrices or skein relations, we fully utilize the combinatorial properties of link diagrams.

Referring to \cite[Page 116--119]{Li97}, we recall one of the definitions of the (multi-variable) Alexander polynomial $\Delta_L$ for an oriented link $L$. Suppose $L$ has components $K_1,\ldots,K_m$.
Let $x_1,\ldots,x_n$ denote the directed arcs of $L$, and $\mathfrak{c}_1,\ldots,\mathfrak{c}_n$ the crossings;
suppose $x_j$ belongs to $K_{\nu(j)}$.
In the Wirtinger presentation for $\pi(L):=\pi_1(S^3\setminus L)$,
each $x_j$ provides a generator, denoted also by $x_j$, and each $\mathfrak{c}_i$ provides a relator $r_i$.
Let $F_n$ denote the free group generated by $x_1,\ldots,x_n$.
Let
$$\Phi:\mathbb{Z}[F_n]\to\mathbb{Z}[t_1^{\pm1},\ldots,t_m^{\pm1}]$$
denote the ring homomorphism determined by $x_j\mapsto t_{\nu(j)}$.
Let $M$ denote the $n\times n$ matrix with the $(i,j)$-entry $\Phi(\partial r_i/\partial x_j)$, where $\partial r_i/\partial x_j$ is the Fox derivative.
Arbitrarily choose $i_0,j_0\in\{1,\ldots,n\}$, and let $M'$ be the matrix obtained by deleting the $i_0$-th row and $j_0$-th column of $M$. Then
$$\Delta_{L}\doteq\begin{cases} \det(M'),&m=1 \\  \det(M')/(1-t_{\nu(j_0)}),&m\ge2 \end{cases},$$
which turns out to be independent of the choices of $i_0$ and $j_0$; here $\doteq$ means equality up to a factor of the form $\pm t_1^{k_1}\cdots t_m^{k_m}$. Denote $t_1$ as $t$ if $m=1$.

The content is organized as follows.
In Section 2, we present a method in Theorem \ref{thm:main}, for computing $\Delta_L$ when $L$ is an arborescent link. As building blocks of formulas, to each rational tangle $[p/q]$ we associate a pair of polynomials $z_v(p/q)$, $z_h(p/q)$ which can be recursively computed.
In Section 3, we apply the method to some families of links. When $L$ is the $(p_1/q_1,\ldots,p_r/q_r)$ Montesinos link, we manage to express $\Delta_L$ in terms of the polynomials $z_v(p_i/q_i)$, $z_h(p_i/q_i)$. Specifically, we deduce explicit formulas for all pretzel links;
in particular, we reprove the formulas for pretzel knots given in \cite{Be25}, justifying the correctness of the method.
In Section 4, we prove Theorem \ref{thm:main}.

\section{The method}

Let $\mathcal{T}_2^2$ denote the set of four-end tangles of the form shown at leftmost in Figure \ref{fig:arborescent}.
To each $T\in\mathcal{T}_2^2$ are associated two links, called the {\it numerator} $N(T)$ and the {\it denominator} $D(T)$.
Defined on $\mathcal{T}_2^2$ are the vertical composition $\ast$ and the horizontal composition $+$.

\begin{figure}[H]
  \centering
  \includegraphics[width=12.5cm]{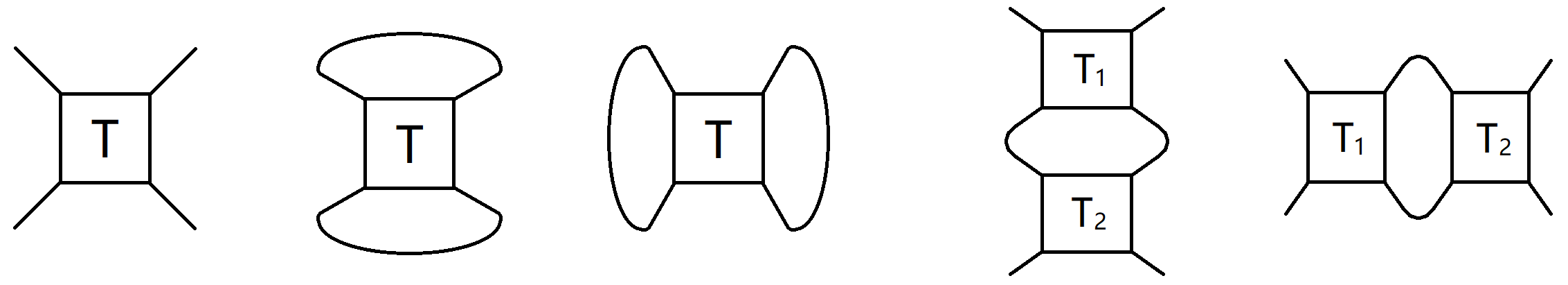}\\
  \caption{From left to right: a tangle $T\in\mathcal{T}_2^2$; $N(T)$; $D(T)$; $T_1\ast T_2$; $T_1+T_2$.}\label{fig:arborescent}
\end{figure}

A tangle $T\in\mathcal{T}_2^2$ is called {\it arborescent} if it can be constructed from copies of $[\pm1]$ (see Figure \ref{fig:basic}) by repeatedly applying $\ast$ and $+$. Let $\mathcal{T}_{\rm ar}$ denote the set of arborescent tangles.
An {\it arborescent link} is one of the form $N(T)$ or $D(T)$ for some $T\in\mathcal{T}_{\rm ar}$.

Since $N(T)=D(T^{\rm rot})$, where $T^{\rm rot}$ results from rotating $T$ by $\pi/2$, we may focus on links of the form $D(T)$.

\begin{figure}[H]
  \centering
  \includegraphics[width=4cm]{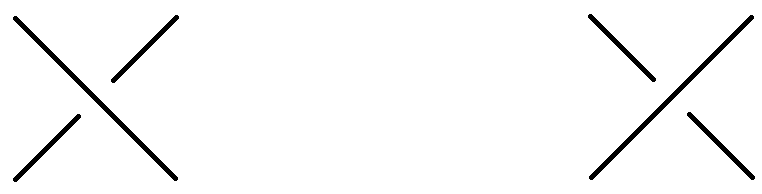}\\
  \caption{Left: $[1]$. Right: $[-1]$.}\label{fig:basic}
\end{figure}

For $k\ne 0$, the horizontal composite of $|k|$ copies of $[1]$ (resp. $[-1]$) is denoted by $[k]$ if $k>0$ (resp. $k<0$), and the
vertical composite of $|k|$ copies of $[1]$ (resp. $[-1]$) is denoted by $[1/k]$ if $k>0$ (resp. $k<0$).
Given $p/q\in\mathbb{Q}$, if a continued fraction of $p/q$ is
$$[[k_1,\ldots,k_s]]:=k_s+1/(k_{s-1}+\cdots+1/k_1)\cdots),$$
then the associated {\it rational tangle} is defined as
$$[p/q]=[[k_1],\ldots,[k_s]]=\begin{cases}
(\cdots([k_{1}]\ast[1/k_{2}])+\cdots)+[k_{s}], &2\nmid s \\
(\cdots([1/k_{1}]+[k_{2}])\ast\cdots)+[k_{s}], &2\mid s
\end{cases}.$$
The notation $[p/q]$ is justified by a result of Conway (see \cite[Proposition 2.1]{FP16}), which asserts that up to isotopy the tangle depends only on $p/q$. We call $[k_s]$ the {\it integer part} of $[p/q]$.

When speaking of $T\in\mathcal{T}_{\rm ar}$, we usually assume that an order of repeated horizontal/vertical compositions of rational tangles has been chosen. Call the arborescent subtangles appearing in the compositions {\it subsequent to} $T$. 
For example, if $R_1,\ldots,R_6$ are rational tangles, then
$$T=((R_1+R_2)+(R_3\ast R_4))\ast (R_5+R_6)\in\mathcal{T}_{\rm ar},$$
and $R_3$, $R_3\ast R_4$, $(R_1+R_2)+(R_3\ast R_4)$, $R_5$, etc. are subsequent to $T$.

Given a tangle $T$, let $\mathcal{D}(T)$ denote the set of directed arcs; each arc of $T$ gives rise to two directed arcs.
For $\mathsf{a}\in\mathcal{D}(T)$, let $\mathsf{a}^{-1}$ denote the directed arc obtained by reversing the direction of $\mathsf{a}$.
When $T\in\mathcal{T}_{\rm ar}$, let $T^{{\rm nw}}$, $T^{{\rm ne}}$, $T^{{\rm sw}}$, $T^{{\rm se}}$ respectively denote the arc at the northwest, northeast, southwest, southeast end of $T$, all directed outward.

\begin{defn}
\rm Let $J=\{t_1^{\pm1},t_2^{\pm1},\ldots,\}$. A {\it coloring} of a tangle $T$ is a map $\alpha:\mathcal{D}(T)\to J$ such that $\alpha(\mathsf{a}^{-1})=\alpha(\mathsf{a})^{-1}$ for each $\mathsf{a}$, and $\alpha(\mathsf{a})=\alpha(\mathsf{b})$ if $\mathsf{a},\mathsf{b}$ belong to the same component of $T$.
Call $\alpha(\mathsf{a})$ the {\it color} of $\mathsf{a}$. Call $(T,\alpha)$ a {\it colored tangle}. We will omit $\alpha$ if it is clear from the context.
\end{defn}

If $D(T)$ is an oriented link, then $\Phi$ equips $T$ with a coloring.

Let $\mathcal{T}_{\rm ar}^c$ denote the set of colored arborescent tangles.
For $T\in\mathcal{T}_{\rm ar}^c$, we always use $\alpha$ to denote its coloring if it is implicit.

Define an equivalence relation $\sim$ on $\mathbb{Z}[J]\times\mathbb{Z}[J]$ by declaring $(f_1,g_1)\sim(f_2,g_2)$ if there exists
$\kappa=\epsilon t_{i_1}^{n_1}\cdots t_{i_r}^{n_r}$ with $n_1,\ldots,n_r\in\mathbb{Z}$, $\epsilon\in\{\pm1\}$ such that $f_2=\kappa f_1$ and $g_2=\kappa g_1$. Let $\Omega=(\mathbb{Z}[J]\times\mathbb{Z}[J])/\sim$.
Denote the equivalence class of $(f,g)$ by $[f:g]$. Clearly, if $g\ne 0$, then $f/g$ depends only on $[f:g]$.

\begin{nota}
\rm For $k\in\mathbb{Z}$ and a unit $a$ in some commutative ring, put
$$[k]_a=\begin{cases} 1+a+\cdots+a^{k-1}, &k>0 \\ 0, &k=0 \\ -a^k(1+a+\cdots+a^{|k|-1}), &k<0\end{cases}.$$
\end{nota}

We associate to each $T\in\mathcal{T}_{\rm ar}^c$ an element $z(T)=[z_v(T):z_h(T)]\in\Omega$, recursively as follows:
\begin{enumerate}
  \item[\rm(R1)] For $k\in\mathbb{Z}$, let $t_{\rm ne}=\alpha([k]^{\rm ne})$, etc., and put $z([k])=[z_v([k]):1]$, with
                \begin{align*}
                z_v([k])=\begin{cases} t_{\rm ne}(t_{\rm se}-1)[h]_{t_{\rm ne}t_{\rm se}},&k=2h  \\
                (1-t_{\rm se})[h]_{t_{\rm ne}t_{\rm se}}-1,&k=2h-1 \end{cases}.
                \end{align*}
                Let $r_{\rm ne}=\alpha([1/k]^{\rm ne})$, etc., and put $z([1/k])=[1:z_h([1/k])]$, with
                \begin{align*}
                z_h([1/k])=\begin{cases} r_{\rm sw}^{-1}(r_{\rm se}^{-1}-1)[h]_{r_{\rm sw}^{-1}r_{\rm se}^{-1}},&k=2h  \\
                (1-r_{\rm se}^{-1})[h]_{r_{\rm sw}^{-1}r_{\rm se}^{-1}}-1,&k=2h-1 \end{cases}.
                \end{align*}
                In particular, $z([-1])=[-1:1]$, $z([1])=[-t_{\rm se}:1]$, with $t_{\rm se}=\alpha([1]^{\rm se})$.
  \item[\rm(R2)] Let $t_{\rm ne}$, $t_{\rm se}$, $t_{\rm sw}$ respectively denote the colors of $T_1^{\rm ne}$, $T_1^{\rm se}$, $T_1^{\rm sw}$.

                For $T=T_1\ast T_2$, put $z_v(T)\doteq z_v(T_1)z_v(T_2)$, and
                $$z_h(T)=z_h(T_1)z_v(T_2)+\frac{1-t_{\rm ne}^{-1}}{1-t_{\rm se}}z_h(T_2)z_v(T_1)
                +\frac{t_{\rm sw}-t_{\rm se}^{-1}}{1-t_{\rm se}^{-1}}z_h(T_1)z_h(T_2);$$
                for $T=T_1+T_2$, put $z_h(T)\doteq z_h(T_1)z_h(T_2)$, and
                $$z_v(T)=z_v(T_1)z_h(T_2)+\frac{1-t_{\rm sw}}{1-t_{\rm se}^{-1}}z_v(T_2)z_h(T_1)
                +\frac{t_{\rm ne}^{-1}-t_{\rm se}}{1-t_{\rm se}}z_v(T_1)z_v(T_2).$$
\end{enumerate}

Given $\mathsf{t}=(t_{\rm ne},t_{\rm se},t_{\rm sw})$, define binary operations $\ast_{\mathsf{t}},\circ_{\mathsf{t}}$ by
\begin{align}
b_1\ast_{\mathsf{t}}b_2&=b_1+\frac{1-t_{\rm ne}^{-1}}{1-t_{\rm se}}b_2+\frac{t_{\rm sw}-t_{\rm se}^{-1}}{1-t_{\rm se}^{-1}}b_1b_2,   \label{eq:composition-b}  \\
c_1\circ_{\mathsf{t}}c_2&=c_1+\frac{1-t_{\rm sw}}{1-t_{\rm se}^{-1}}c_2+\frac{t_{\rm ne}^{-1}-t_{\rm se}}{1-t_{\rm se}}c_1c_2.    \label{eq:composition-c}
\end{align}
When $z_v(T_1)z_v(T_2)\ne 0$, the formula for $z_h(T)$ in (R2) can be rephrased as
\begin{align}
\frac{z_h(T_1\ast T_2)}{z_v(T_1\ast T_2)}=\frac{z_h(T_1)}{z_v(T_1)}\ast_{\mathsf{t}}\frac{z_h(T_2)}{z_v(T_2)};  \label{eq:fraction-v}
\end{align}
when $z_h(T_1)z_h(T_2)\ne 0$, the formula for $z_v(T)$ in (R2) can be rephrased as
\begin{align}
\frac{z_v(T_1+T_2)}{z_h(T_1+T_2)}=\frac{z_v(T_1)}{z_h(T_1)}\circ_{\mathsf{t}}\frac{z_v(T_2)}{z_h(T_2)}.   \label{eq:fraction-h}
\end{align}

The main result of the paper is
\begin{thm}\label{thm:main}
Suppose $L=D(T)$ is an oriented arborescent link, then $\Delta_{L}\doteq z_h(T)$ if $L$ is a knot, and
$\Delta_L\doteq z_h(T)/(1-\Phi(T^{\rm ne}))$ otherwise.
\end{thm}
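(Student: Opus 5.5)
We will prove Theorem \ref{thm:main} by attaching to every colored arborescent tangle $T$ a ``partial Fox matrix'' and showing that a pair of its maximal minors behaves exactly like $[z_v(T):z_h(T)]$ under (R1) and (R2). Let $x_1,\ldots,x_n$ be the arcs of $T$, including the four end arcs, and let $M_T$ be the matrix whose rows are indexed by the crossings of $T$ and whose columns are indexed by the arcs. Its $(i,j)$-entry is $\Phi(\partial r_i/\partial x_j)$, where $\Phi$ is given by the coloring. A tangle with $c$ crossings has $c+2$ arcs, so $M_T$ is a $c\times(c+2)$ matrix.

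The plan is to delete two end-arc columns and take the determinant. Deleting the columns of $T^{\rm sw},T^{\rm se}$ (or alternatively of $T^{\rm ne},T^{\rm se}$) gives square minors. We then use the Fox fundamental identity $\sum_j \partial r_i/\partial x_j\,(x_j-1)=r_i-1$, which after applying $\Phi$ becomes $\sum_j M_{ij}(t_{\nu(j)}-1)=0$. This linear relation among the columns shows that, up to explicit factors $(1-t)$, all the $c\times c$ minors obtained by deleting two end columns are determined by two basic ones, $P_v(T)$ and $P_h(T)$. The claim to prove is $[P_v(T):P_h(T)]=[z_v(T):z_h(T)]$ in $\Omega$, after a suitable normalization by factors of $(1-t_{\rm se})$ and similar terms.

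With this in place, closing $D(T)$ joins $T^{\rm nw}$ to $T^{\rm ne}$ and $T^{\rm sw}$ to $T^{\rm se}$. This identifies two pairs of columns and adds no crossings, so the $c\times(c+2)$ matrix becomes the $c\times c$ Wirtinger matrix $M$ of $L$ with $n=c$. Deleting one row and the column of the merged arc $T^{\rm ne}=T^{\rm nw}$ gives $M'$. Expanding by multilinearity in the merged columns, and using the column relation once more, we expect $\det M'$ to be a combination of the basic minors that collapses to $z_h(T)$. The factor $(1-\Phi(T^{\rm ne}))$ then appears as in the definition of $\Delta_L$.

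The inductive steps are as follows. (1) Base case: for $[k]$ and $[1/k]$, the matrix is bidiagonal in a twist region. A direct computation, with induction on $|k|$ and separate treatment of $k$ even and odd because the end colors alternate, should reproduce the formulas in (R1), including $z([\pm1])$. (2) For $T_1\ast T_2$, the south end arcs of $T_1$ are identified with the north end arcs of $T_2$. The matrix $M_T$ is therefore block-shaped, with $M_{T_1}$ and $M_{T_2}$ sharing two columns. The Laplace expansion of a maximal minor along the shared columns then expresses it as a sum of products of minors of $T_1$ and $T_2$. Rewriting each such minor through the column relation in terms of the basic ones gives the coefficients $(1-t_{\rm ne}^{-1})/(1-t_{\rm se})$ and $(t_{\rm sw}-t_{\rm se}^{-1})/(1-t_{\rm se}^{-1})$, producing the formula for $z_h(T_1\ast T_2)$ together with $z_v(T)\doteq z_v(T_1)z_v(T_2)$. (3) The case $T_1+T_2$ follows by rotating by $\pi/2$, which exchanges the roles of $v$ and $h$; alternatively it can be repeated verbatim.

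The main obstacle is bookkeeping rather than ideas. We must fix a consistent normalization of the basic minors, including signs and monomial units, that is genuinely invariant under both compositions. We must also verify that the rational coefficients, after clearing denominators, match (R2) exactly. Coloring constraints help here: an arc shared by $T_1$ and $T_2$ carries the same color on both sides, and $t_{\rm nw}$ is determined by the other three end colors via the arc pairings. Finally, the knot case, where the merged column has $t=1$ in the relevant factor, requires deleting a different column and arguing separately that the answer is still $z_h(T)$ with no $(1-t)$ factor.
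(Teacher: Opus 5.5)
\textbf{The reduction to two basic minors is where the proof breaks.} Your framework is compatible with the paper's. The paper's $\tilde z_v(T),\tilde z_h(T)$ are, up to sign, exactly the maximal minors of the rescaled matrix $Q_T$ with the columns of $T^{\rm nw},T^{\rm ne}$, resp.\ $T^{\rm nw},T^{\rm sw}$, removed, and (\ref{eq:zv-zv}) is what your Laplace expansion gives. The gap is your claim that the Fox fundamental identity reduces all end-deleted minors to two basic ones. That identity only says $(1-\tau_j)_j\in\ker M_T$, i.e.\ $\mathbf 1\in\ker Q_T$. If $\ker Q_T=\langle\mathbf 1,w\rangle$, the minor with end columns $j,k$ removed is $\pm\lambda(w_k-w_j)$, so the six end minors still carry \emph{three} independent quantities, e.g.\ $\Delta_{\rm nw,ne},\Delta_{\rm nw,sw},\Delta_{\rm nw,se}$.

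This is fatal for your step (2). Laplace-expanding the (nw,sw)-minor of $T_1\ast T_2$ along the rows of $T_1$ gives $\pm\Delta^{T_1}_{\rm nw,se}\Delta^{T_2}_{\rm nw,sw}\pm\Delta^{T_1}_{\rm nw,sw}\Delta^{T_2}_{\rm ne,sw}$. The factor $\Delta^{T_1}_{\rm nw,se}$ is the paper's $b^{\rm se}_{T_1}$ in $b^{\rm sw}_{T_1\ast T_2}=b^{\rm sw}_{T_1}(1-b^{\rm sw}_{T_2})+b^{\rm se}_{T_1}b^{\rm sw}_{T_2}$, and it is not determined by $z_v(T_1),z_h(T_1)$. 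So the recursion does not close on pairs, and a coefficient like $(t_{\rm sw}-t_{\rm se}^{-1})/(1-t_{\rm se}^{-1})$ cannot arise from $(1-\tau)$ rescalings.

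The missing idea is the boundary relation itself, not just its abelianisation $t_{\rm nw}t_{\rm ne}t_{\rm se}t_{\rm sw}=1$, which is all you use. Since $T^{\rm nw}T^{\rm ne}T^{\rm se}T^{\rm sw}=1$ in $\pi_1(\mathfrak B\setminus T)$, this word is a product of conjugates of crossing relators. Its Fox derivative, a vector supported on the four end columns, therefore lies in the row space of the tangle matrix. That yields the extra relation (\ref{eq:constraint}), resp.\ (\ref{eq:constraint'}) for $+$. The paper uses it to eliminate $b^{\rm se}_{T_1}$ and obtain exactly $\ast_{\mathsf t}$ and $\circ_{\mathsf t}$. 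With it added (phrased as a vanishing $(c+1)$-minor), your Laplace route can be made to work. It would even avoid the paper's genericity/deformation step, since no division by minors is needed.

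\textbf{Further points to repair.}
\begin{itemize}
\item \emph{Wrong basic minors.} The minors you single out (columns sw,se and ne,se removed) are not $[z_v:z_h]$ under any normalization. For $[3]$ coloured as in the odd Montesinos case ($t_{\rm ne}=t^{-1}$, $t_{\rm se}=t$, where $M_T=Q_T$ because every $\tau_j=t$), the minor with sw,se removed is $\doteq 2-t$, whereas $z_v(3)=1-2t$. Use (nw,ne) and (nw,sw) instead.
\item \emph{Wrong closure.} You describe the numerator closure. $D(T)$ identifies $T^{\rm nw}$ with $T^{\rm sw}$ and $T^{\rm ne}$ with $T^{\rm se}$; joining nw to ne gives $N(T)$, which is governed by $z_v$.
\item \emph{Closing computation.} This cannot be finished with the column relation. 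Merging columns and expanding produces $(c-1)\times(c-1)$ minors with a row deleted. To compare them with $\Delta_{\rm nw,sw}(T)$ you need the row dependency of $Q_L$, i.e.\ redundancy of one Wirtinger relator. This is again the boundary relation, now with $T^{\rm sw}=(T^{\rm nw})^{-1}$ and $T^{\rm se}=(T^{\rm ne})^{-1}$. The paper does this by row-reducing to $N_v^T$ and reading off $\det=\tilde z_v b^{\rm sw}=-\tilde z_h$.
\item \emph{Knot case.} No separate argument is needed: by (\ref{eq:AP}), $\det M'/(1-\tau_{j_0})\doteq\det Q'/(1-\tau_{i_0'})$, and for a knot all $\tau$'s equal $t$.
\end{itemize}
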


\begin{nota}
\rm For $(T,\alpha)\in\mathcal{T}_{\rm ar}^c$, let $\sigma(T,\alpha)=(T^\sigma,\alpha^\sigma)$, where $T^\sigma$ denotes the tangle obtained by reflecting $T$ along its NW-SE diagonal, and $\alpha^\sigma(\mathsf{a}')=\alpha(\mathsf{a})^{-1}$ for each $\mathsf{a}'\in\mathcal{D}(T^\sigma)$ corresponding to $\mathsf{a}\in\mathcal{D}(T)$ under the reflection.
\end{nota}

\begin{lem}\label{lem:reflection}
$z(\sigma(T))=[z_h(T):z_v(T)]$ for each $T\in\mathcal{T}_{\rm ar}^c$.
\end{lem}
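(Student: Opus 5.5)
We argue by structural induction on $T\in\mathcal{T}_{\rm ar}^c$, following the chosen order of compositions. The key geometric facts are that reflection along the NW–SE diagonal interchanges the two compositions and the two kinds of integer tangles: $(T_1\ast T_2)^\sigma=T_1^\sigma+T_2^\sigma$, $(T_1+T_2)^\sigma=T_1^\sigma\ast T_2^\sigma$, and $[k]^\sigma=[1/k]$. One must be careful with signs here. Reflection reverses crossing signs, but the diagonal reflection composed with the rotation that restores the standard picture maps $[1]$ to $[1]$ viewed vertically. So we first check from Figure \ref{fig:basic} that $[\pm1]^\sigma=[\pm1]$ with the same sign. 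We also record how the ends move: the NW and SE ends are fixed, while the NE and SW ends are swapped. Combined with $\alpha^\sigma=\alpha^{-1}$ on corresponding arcs, this gives
$$\alpha^\sigma((T^\sigma)^{\rm se})=\alpha(T^{\rm se})^{-1},\qquad \alpha^\sigma((T^\sigma)^{\rm ne})=\alpha(T^{\rm sw})^{-1},\qquad \alpha^\sigma((T^\sigma)^{\rm sw})=\alpha(T^{\rm ne})^{-1}.$$

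For the base case (R1), take $T=[k]$ with colors $t_{\rm ne},t_{\rm se}$. Then $T^\sigma=[1/k]$, and its colors satisfy $r_{\rm se}=t_{\rm se}^{-1}$ and $r_{\rm sw}=t_{\rm ne}^{-1}$. Hence $r_{\rm sw}^{-1}=t_{\rm ne}$ and $r_{\rm se}^{-1}=t_{\rm se}$. Substituting into the defining formula for $z_h([1/k])$ gives exactly $z_v([k])$ in both parity cases, so $z([1/k])=[1:z_v([k])]=[z_h([k]):z_v([k])]$. Since $\sigma$ is an involution, the case $T=[1/k]$ follows by the same substitution. The formulas were evidently designed for this match, so the base case is a direct check.

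For the inductive step, let $T=T_1\ast T_2$, so that $T^\sigma=T_1^\sigma+T_2^\sigma$. By the induction hypothesis, $z_v(T_i^\sigma)=z_h(T_i)$ and $z_h(T_i^\sigma)=z_v(T_i)$, with a common unit factor for each $i$. Then $z_h(T^\sigma)\doteq z_h(T_1^\sigma)z_h(T_2^\sigma)=z_v(T_1)z_v(T_2)\doteq z_v(T)$. For $z_v(T^\sigma)$, the (R2) formula for $+$ uses the colors of $T_1^\sigma$, namely $t'_{\rm sw}=t_{\rm ne}^{-1}$, $t'_{\rm se}=t_{\rm se}^{-1}$ and $t'_{\rm ne}=t_{\rm sw}^{-1}$, where $t_\bullet$ are the colors of $T_1$. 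The coefficients then become
$$\frac{1-t'_{\rm sw}}{1-t'^{-1}_{\rm se}}=\frac{1-t_{\rm ne}^{-1}}{1-t_{\rm se}},\qquad \frac{t'^{-1}_{\rm ne}-t'_{\rm se}}{1-t'_{\rm se}}=\frac{t_{\rm sw}-t_{\rm se}^{-1}}{1-t_{\rm se}^{-1}},$$
which are exactly the coefficients in the $\ast$-formula for $z_h(T)$. The $+$-formula for $z_v(T^\sigma)$ is therefore termwise the $\ast$-formula for $z_h(T)$. The case $T=T_1+T_2$ is symmetric, or follows by applying the previous case to $T^\sigma$ and using $\sigma^2=\mathrm{id}$.

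The main obstacle is the normalization by units. Both $z(T)$ and the products $z_v(T_1)z_v(T_2)$ are defined only up to $\doteq$, so the representatives must be chosen compatibly. We handle this by working with representatives $(f,g)$ and noting that the (R2) formulas are bilinear: scaling $(z_v(T_i),z_h(T_i))$ by a unit $\kappa_i$ scales both output coordinates by $\kappa_1\kappa_2$. So the class in $\Omega$ is well defined, and the identification above holds at the level of classes. A secondary point is that the end-relabeling and the sign convention for $[\pm1]^\sigma$ must be verified against the figures. Note that $\sigma$ must preserve the sign of the crossing, which is consistent with $z([1])=[-t_{\rm se}:1]$ and $z_h([1])=-r_{\rm se}^{-1}=-t_{\rm se}$ under the above substitution.
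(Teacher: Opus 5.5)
Your proof is correct and follows the same route as the paper. It uses structural induction: the base cases $[k]\leftrightarrow[1/k]$ are checked directly from (R1) via $r_{\rm sw}^{-1}=t_{\rm ne}$ and $r_{\rm se}^{-1}=t_{\rm se}$, and the inductive step observes that, after the end-color relabeling induced by $\sigma$, the $+$-formula of (R2) for $z_v(T_1^\sigma+T_2^\sigma)$ is term-by-term the $\ast$-formula for $z_h(T_1\ast T_2)$.

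You give more detail than the paper, namely the explicit coefficient matching and the bilinearity argument showing that the unit ambiguity is harmless. The aside about composing with a rotation is unnecessary, since the diagonal reflection already maps the single crossing $[\pm1]$ to itself.
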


\begin{proof}
For $[k]$ and $[1/k]$ with $k\in\mathbb{Z}$, the assertion is true by the definition given in (R1).

Suppose the assertion holds for $T_1,T_2$.
Note that the colors of $\sigma(T_1)^{\rm ne}$, $\sigma(T_1)^{\rm se}$, $\sigma(T_1)^{\rm sw}$ are respectively the inverses of those of
$T_1^{\rm sw}$, $T_1^{\rm se}$, $T_1^{\rm ne}$. From the formulas in (R2) we can verify the assertion for $T_1\ast T_2$ and $T_1+T_2$.

By the recursive nature, $z(\sigma(T))=[z_h(T):z_v(T)]$ for each $T\in\mathcal{T}_{\rm ar}^c$.
\end{proof}

\begin{nota}
\rm Abbreviate $z_h([p/q])$ to $z_h(p/q)$, and $z_v([p/q])$ to $z_v(p/q)$.
\end{nota}

\begin{conv}\label{conv:zh}
\rm From now on, till the end of Section \ref{sec:computation}, we always set $z_h(k)=1$ for any $k\in\mathbb{Z}$, unless otherwise specified.
\end{conv}

\begin{figure}[H]
  \centering
  \includegraphics[width=11cm]{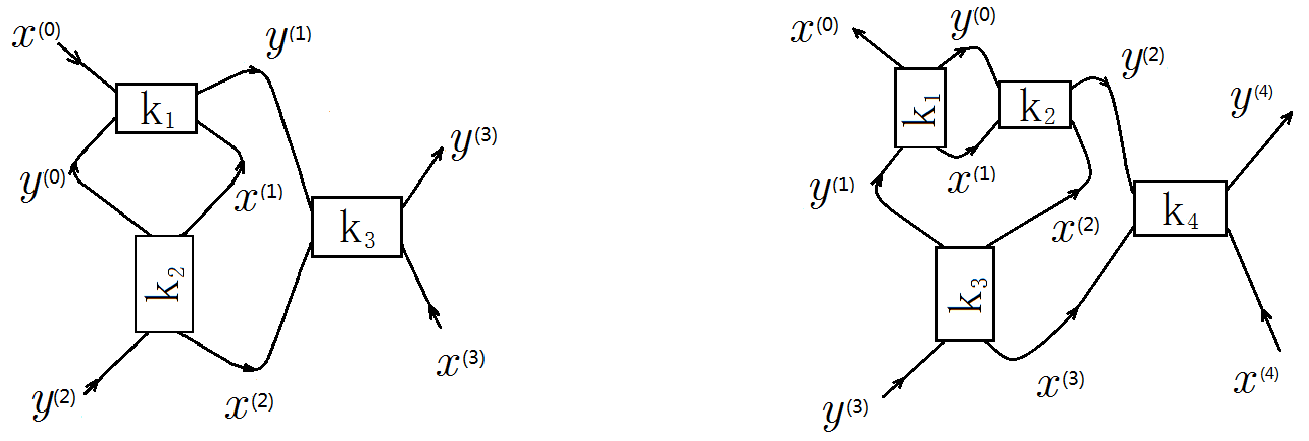}\\
  \caption{The rational tangle $[p/q]=[[k_1],\ldots,[k_s]]$, with $s=3$ on the left, and $s=4$ on the right. The arrows are used to choose a direction for each arc.}\label{fig:rational}
\end{figure}

\begin{lem}\label{lem:rational}
For a colored rational tangle $[p/q]=[[k_1],\ldots,[k_s]]$, as shown in Figure \ref{fig:rational}, let $u_i,v_i$ respectively denote the color of $x^{(i)}$, $y^{(i)}$. Let $t_1=u_0$, $t_2=v_0$, and for $1\le i\le s$, let
\begin{align*}
b_i=\begin{cases} v_{i}(u_i^{-1}-1)[h]_{v_{i}/u_i}, &k_{i}=2h \\
(1-v_{i})[h]_{v_{i}/u_i}-1,&k_{i}=2h-1 \end{cases}.
\end{align*}
Set $\eta_0=1$, $\eta_1=b_1$, and recursively compute $\eta_i$ via
\begin{align*}
\eta_{i}=\eta_{i-2}+\frac{1-v_{i-1}^{-1}}{1-u_{i-1}^{-1}}\eta_{i-1}b_i+\frac{v_{i-2}^{-1}-u_{i-1}}{1-u_{i-1}}\eta_{i-2}b_i.
\end{align*}
Then $z([p/q])=[z_v(p/q),z_h(p/q)]=[\eta_{s}:\eta_{s-1}]$.
\end{lem}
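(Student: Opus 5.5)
We prove the statement by induction on $s$, following the recursive description of $[p/q]=[[k_1],\ldots,[k_s]]$. With the convention $z_h(k)=1$, set $R_i=[[k_1],\ldots,[k_i]]$. Then $R_i$ is obtained from $R_{i-1}$ by one composition with an integer-type tangle. When $i\equiv s \pmod 2$ this is $R_i=R_{i-1}+[k_i]$. Otherwise it is $R_i=R_{i-1}\ast[1/k_i]$, where the first tangle $[k_1]$ or $[1/k_1]$ is also read as this kind of step from a trivial tangle. The claim to prove is the following. Up to a common unit factor, and after accounting for the reflection $\sigma$ of Lemma \ref{lem:reflection} at alternate steps, the pair $(z_v(R_i),z_h(R_i))$ equals $(\eta_i,\eta_{i-1})$ when $i\equiv s$, and $(\eta_{i-1},\eta_i)$ otherwise. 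The case $i=s$ is the lemma.

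\textbf{Base case.} First we use Figure \ref{fig:rational} to identify the colors. For a horizontal twist $[k_i]$ the NE and SE end colors are expressible through $u_i,v_i$, namely $t_{\rm ne}t_{\rm se}=v_i/u_i$ and $t_{\rm se}=v_i$ or its inverse according to orientation. Under this identification $z_v([k_i])$ from (R1) matches $b_i$. For vertical twists $[1/k_i]$ the formula $z_h([1/k])$ in (R1) is exactly the $\sigma$-image of $z_v([k])$ by Lemma \ref{lem:reflection}, with colors inverted, so it also equals $b_i$ in the relabelled variables. This settles $i=1$: $z(R_1)$ is $[b_1:1]$ or $[1:b_1]$, i.e.\ $[\eta_1:\eta_0]$ up to orientation of the pair.

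\textbf{Inductive step.} Consider a horizontal step $R_i=R_{i-1}+[k_i]$. Formula (R2) gives $z_h(R_i)=z_h(R_{i-1})\cdot 1$, and
$$z_v(R_i)=z_v(R_{i-1})+\frac{1-t_{\rm sw}}{1-t_{\rm se}^{-1}}\,b_i\,z_h(R_{i-1})+\frac{t_{\rm ne}^{-1}-t_{\rm se}}{1-t_{\rm se}}\,z_v(R_{i-1})\,b_i ,$$
where $t_\bullet$ are the end colors of $R_{i-1}$. By the inductive hypothesis $z_h(R_{i-1})=\eta_{i-1}$. Before a horizontal step, $R_{i-1}$ came from a vertical step, so its $z_v$ is the older entry, $z_v(R_{i-1})=\eta_{i-2}$. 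Substituting gives $\eta_{i-2}+c_1\eta_{i-1}b_i+c_2\eta_{i-2}b_i$, which is the recursion provided $c_1=(1-v_{i-1}^{-1})/(1-u_{i-1}^{-1})$ and $c_2=(v_{i-2}^{-1}-u_{i-1})/(1-u_{i-1})$. The vertical step is symmetric: either use the $z_h$ formula in (R2) directly, or apply $\sigma$ and Lemma \ref{lem:reflection} to reduce to the horizontal case with inverted colors. So it suffices to treat one parity.

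\textbf{Main obstacle.} The hardest step is the color bookkeeping: verifying from Figure \ref{fig:rational} that the SW, SE and NE ends of $R_{i-1}$ carry exactly the colors $v_{i-1}$, $u_{i-1}$ and $v_{i-2}$, up to the inversions introduced by $\sigma$ on alternate steps. The ends $x^{(i-1)},y^{(i-1)}$ of the last twist region become ends of $R_{i-1}$, and one of the older arcs $y^{(i-2)}$ becomes the remaining end. I would check this first for $s=3$ and $s=4$ directly in the figure, then note that the pattern is periodic of period $2$. The remaining care point is the unit ambiguity in $\doteq$. The relations $z_h\doteq\cdots$ and $z_v\doteq\cdots$ are products, so both coordinates must carry the same unit. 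I would normalize so that the unit is $1$ at each step, which is consistent because the affine formula in (R2) has no unit ambiguity.
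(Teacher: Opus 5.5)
Your proposal is correct and is essentially the paper's argument: induction on $s$ via (R1), the bilinear rules (R2) with a consistent unit normalization, Lemma~\ref{lem:reflection}, and the identification of the end colours of the previous stage with $v_{i-2},u_{i-1},v_{i-1}$ (up to the inversions and NE/SW swap coming from $\sigma$), read off Figure~\ref{fig:rational}. The paper streamlines this by writing $[p/q]=\sigma(H)+[k_s]$ with $H=[[k_1],\ldots,[k_{s-1}]]$ and applying the inductive hypothesis to $H$, so only the horizontal step ever occurs; this is also why your $R_i$ must be read as the partial composites inside $[p/q]$, and not literally as $[[k_1],\ldots,[k_i]]$, which by definition always ends with $+[k_i]$.
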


\begin{rmk}\label{rmk:rational}
\rm Although up to isotopy $[p/q]$ is determined by $p/q$, the functions $z_v(p/q)$, $z_h(p/q)$ do depend on the continued fraction $[k_1,\ldots,k_m]$.

Set $t_1=t_2=-1$, then $b_i=k_i$, so $\eta_i$ equals the numerator of $[[k_1,\ldots,k_i]]$. In particular, $\eta_s=p$, and $\eta_{s-1}=q$. Since $[[k_1],\ldots,[k_s]]$ has been defined, we have $\eta_i\ne 0$ for all $1\le i<s$.

Consequently, as elements of $\mathbb{Z}[J]$, $\eta_i\ne 0$ for all $1\le i<s$.
\end{rmk}

\begin{proof}
We use induction on $s$ to prove the assertion, which holds when $s=1$, since $z([k_1])=[b_1:1]$.

Suppose $s=n>1$ and that the assertion for $s=n-1$.

Let $H=[[k_1],\ldots,[k_{s-1}]]$.
Observe that $[p/q]=\sigma(H)+[k_s]$, so
\begin{align*}
z_h(p/q)\doteq z_h(\sigma(H))z_h(k_s)\doteq z_v(H)\doteq\eta_{s-1};
\end{align*}
we may write $z_h(p/q)=\kappa\eta_{s-1}$ for some unit $\kappa$.

Let $t_{\rm ne}$, $t_{\rm se}$, $t_{\rm sw}$ respectively denote the colors of $\sigma(H)^{\rm ne}$, $\sigma(H)^{\rm se}$, $\sigma(H)^{\rm sw}$, then
\begin{align*}
\frac{z_v(p/q)}{z_h(p/q)}&=\frac{z_v(\sigma(H))}{z_h(\sigma(H))}\circ_{\mathsf{t}}\frac{z_v(k_s)}{z_h(k_s)}
=\frac{z_h(H)}{z_v(H)}\circ_{\mathsf{t}}z_v(k_s)  \\
&=\frac{\eta_{s-2}}{\eta_{s-1}}+\frac{1-t_{\rm sw}}{1-t_{\rm se}^{-1}}b_s
+\frac{t_{\rm ne}^{-1}-t_{\rm se}}{1-t_{\rm se}}\frac{\eta_{s-2}}{\eta_{s-1}}b_s.
\end{align*}
Noticing $t_{\rm ne}=v_{s-2}$, $t_{\rm se}=u_{s-1}$, $t_{\rm sw}=v_{s-1}$, we obtain
$$z_v(p/q)=\kappa\left(\eta_{s-2}+\frac{1-v_{s-1}^{-1}}{1-u_{s-1}^{-1}}\eta_{s-1}b_s+\frac{v_{s-2}^{-1}-u_{s-1}}{1-u_{s-1}}\eta_{s-2}b_s\right)
=\kappa\eta_s.$$
Thus, $z([p/q])=[\eta_s:\eta_{s-1}]$, i.e. the assertion holds for $s=n$.

The proof is complete.
\end{proof}

\begin{exmp}\label{exmp:tangle}
\rm Let $T_1=[[2h_1],[2h_2]]$, $T_2=[[2h_1-1],[2h_2]]$. See Figure \ref{fig:doubletwist}.

For $T_1$, we have $u_1=t_2^{-1}$, $v_1=t_1$, $u_2=v_0=v_2=t_2$, so
\begin{align*}
\eta_1&=b_1=t_1(t_2-1)[h_1]_{t_1t_2},  \qquad  b_2=h_2(1-t_2),  \\
\eta_2&=1+\frac{1-t_1^{-1}}{1-t_2}b_1b_2=1+h_2(t_1-1)(t_2-1)[h_1]_{t_1t_2}.
\end{align*}

\begin{figure}[H]
  \centering
  \includegraphics[width=8.5cm]{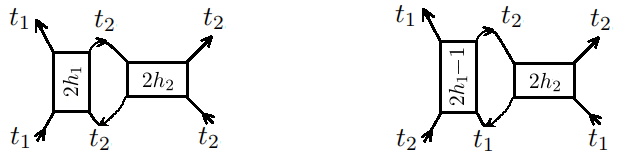}\\
  \caption{Left: $[[2h_1],[2h_2]]$. Right: $[[2h_1-1],[2h_2]]$.}\label{fig:doubletwist}
\end{figure}

For $T_2$, we have $u_1=t_1^{-1}$, $u_2=t_1$, $v_0=v_1=v_2=t_2$, so
\begin{align*}
\eta_1&=b_1=(1-t_2)[h_1]_{t_1t_2}-1, \qquad  b_2=t_2(t_1^{-1}-1)[h_2]_{t_1/t_2},   \\
\eta_2&=1+\frac{1-t_2^{-1}}{1-t_1}b_1b_2+\frac{t_2^{-1}-t_1^{-1}}{1-t_1^{-1}}b_2   \\
&=1+t_1^{-1}\big(1-t_1-(t_2-1)^2[h_1]_{t_1t_2}\big)[h_2]_{t_1/t_2}.
\end{align*}
\end{exmp}

\section{Some computations}\label{sec:computation}

\subsection{An illustrating example}

\begin{figure}[H]
  \centering
  \includegraphics[width=4cm]{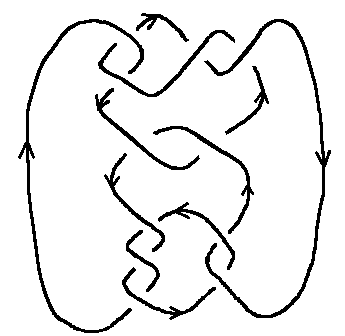}\\
  \caption{The knot $K$, with an orientation chosen.}\label{fig:example}
\end{figure}

Consider the knot $K=D(T_1\ast T_2\ast T_3)$, with $T_1=[[2],[-2]]$, $T_2=[2]$, $T_3=[1/3]+[1/2]$, as shown in Figure \ref{fig:example}.

For $T_1$, by Example \ref{exmp:tangle}, with $t_1=t^{-1}$, $t_2=t$, $h_1=1$, $h_2=-1$, we have $\eta_1=1-t^{-1}$, $\eta_2=t+t^{-1}-1$,
so by Lemma \ref{lem:rational},
$$z(T_1)=[t+t^{-1}-1:1-t^{-1}].$$

By (R1), $z(T_2)=[1-t:1]$.

By (R1) again,
$$z([1/3])=[1:t^{-2}-t^{-1}-t^{-3}], \qquad  z([1/2])=[1:t^2-t].$$
We may set $z_v(1/3)=z_v(1/2)=1$, and
$$z_h(1/3)=t^{-2}-t^{-1}-t^{-3}, \qquad  z_h(1/2)=t^2-t.$$
Note that $\Phi([1/3]^{\rm ne})=t^{-1}$, $\Phi([1/3]^{\rm se})=\Phi([1/3]^{\rm sw})=t$; with $\mathsf{t}=(t^{-1},t,t)$,
\begin{align*}
z_v(T_3)&\doteq z_h(T_3)\left(\frac{1}{z_h(1/3)}\circ_{\mathsf{t}}\frac{1}{z_h(1/2)}\right)  \\
&=z_h(1/3)z_h(1/2)\left(\frac{1}{t^{-2}-t^{-1}-t^{-3}}-t\cdot\frac{1}{t^2-t}\right)  \\
&=t^2-t+1-t^{-1}+t^{-2}.
\end{align*}

For $S\in\{T_1,T_1\ast T_2\}$, as is easy to see, $\Phi(S^{\rm ne})=t$, $\Phi(S^{\rm se})=t^{-1}$, $\Phi(S^{\rm sw})=t$.
Hence with $\mathsf{t}=\mathsf{t}'=(t,t^{-1},t)$, we may compute
\begin{align*}
\frac{z_h(T_1\ast T_2\ast T_3)}{z_v(T_1)z_v(T_2)z_v(T_3)}
&=\left(\frac{z_h(T_1)}{z_v(T_1)}\ast_{\mathsf{t}}\frac{z_h(T_2)}{z_v(T_2)}\right)\ast_{\mathsf{t}'}\frac{z_h(T_3)}{z_v(T_3)}   \\
&=\frac{t-1}{t^2-t+1}+\frac{1}{1-t}+\frac{(t-1)(t-t^2-1)}{t^4-t^3+t^2-t+1} \\
&=\frac{-t^3(t^3-3t^2+7t-9+7t^{-1}-3t^{-2}+t^{-3})}{(t^4-t^3+t^2-t+1)(t-1)(t^2-t+1)}.
\end{align*}
Thus,
\begin{align*}
\Delta_K\doteq z_h(T_1\ast T_2\ast T_3)\doteq t^3-3t^2+7t-9+7t^{-1}-3t^{-2}+t^{-3}.
\end{align*}

\subsection{A family of non-Montesinos links}

Consider the link of the form $L=D(T)$, $T=(T_1+T_2)\ast T_0\ast(T_3+T_4)$, with
$$T_0=[2h], \qquad  T_1=\Big[\frac{1}{n_1}\Big],  \qquad T_2=\Big[\frac{1}{n_2}\Big],
\qquad T_3=\Big[\frac{-1}{n_1}\Big], \qquad  T_4=\Big[\frac{-1}{n_2}\Big].$$

\begin{figure}[h]
  \centering
  \includegraphics[width=13cm]{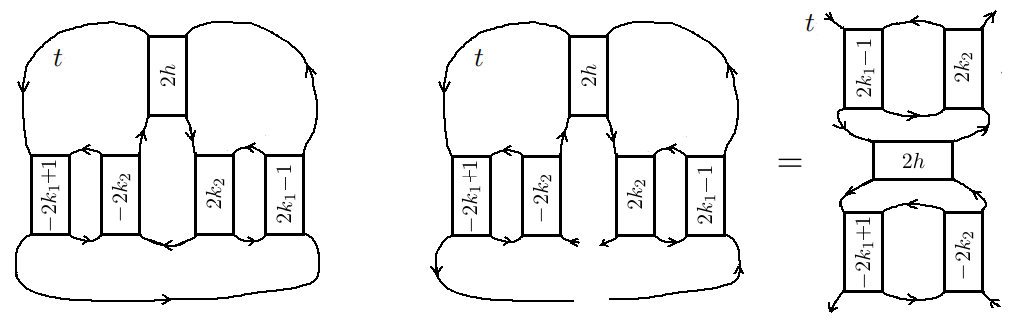}\\
  \caption{Left: the knot $L=D(T)$. Right: $T=(T_1+T_2)\ast T_0\ast(T_3+T_4)$.}\label{fig:generalized-1}
\end{figure}

Suppose $n_1=2k_1-1$, $n_2=2k_2$. Then $L$ is a knot, as shown in Figure \ref{fig:generalized-1}.
We may set $z_v(T_1)=\cdots=z_v(T_4)=z_h(T_0)=1$. By (R1),
\begin{alignat*}{2}
z_h(T_1)&=-\frac{t^{-n_1}+1}{t+1}, \qquad &z_h(T_2)&=\frac{t(t^{n_2}-1)}{t+1}, \\
z_h(T_3)&=-\frac{t^{n_1}+1}{t+1}, \qquad &z_h(T_4)&=\frac{t(t^{-n_2}-1)}{t+1}.
\end{alignat*}
Moreover, $z_v(T_0)=h(1-t)$.
Hence
\begin{align*}
z_v(T_1+T_2)&=z_h(T_1)z_h(T_2)\left(\frac{1}{z_h(T_1)}-t\cdot\frac{1}{z_h(T_2)}\right)=\frac{t(t^{-n_1}+t^{n_2})}{t+1}, \\
z_v(T_3+T_4)&=z_h(T_3)z_h(T_4)\left(\frac{1}{z_h(T_3)}-t\cdot\frac{1}{z_h(T_4)}\right)=\frac{t(t^{n_1}+t^{-n_2})}{t+1}.
\end{align*}
Consequently,
\begin{align*}
\frac{z_h(T)}{z_v(T)}
&=\frac{z_h(T_1)z_h(T_2)}{z_v(T_1+T_2)}+\frac{z_h(T_0)}{z_v(T_0)}+\frac{z_h(T_3)z_h(T_4)}{z_v(T_3+T_4)}  \\
&=\frac{(t^{-n_1}+1)(1-t^{n_2})}{(t+1)(t^{-n_1}+t^{n_2})}+\frac{1}{h(1-t)}+\frac{(t^{n_1}+1)(1-t^{-n_2})}{(t+1)(t^{n_1}+t^{-n_2})}
=\frac{1}{h(1-t)}.
\end{align*}
Since
\begin{align*}
z_v(T)=z_v(T_1+T_2)z_v(T_3+T_4)z_v(T_0)=\frac{t^2(t^{-n_1}+t^{n_2})(t^{n_1}+t^{-n_2})}{(t+1)^2}\cdot h(1-t),
\end{align*}
we have
$$\Delta_L\doteq\frac{t^{n_1+n_2}+t^{-n_1-n_2}+2}{t+t^{-1}+2}.$$

In particular, if $n_1+n_2\in\{\pm1\}$, in which case $L$ is called a {\it generalized Kinoshita-Terasaka knot}
(see \cite[Page 84]{Li97}), then $\Delta_L\doteq 1$.

\begin{figure}[H]
  \centering
  \includegraphics[width=13cm]{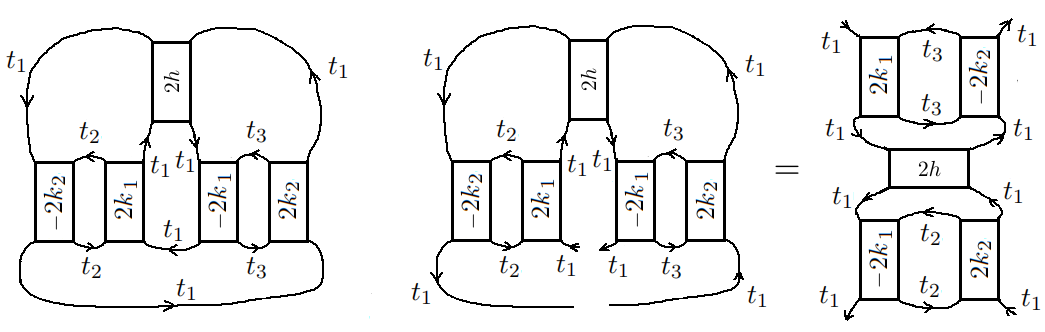}\\
  \caption{Left: the link $L=D(T)$. Right: $T=(T_1+T_2)\ast T_0\ast(T_3+T_4)$.}\label{fig:generalized-2}
\end{figure}

Now suppose $n_1=2k_1$, $n_2=-2k_2$. Then $L$ is a $3$-component link, as shown in Figure \ref{fig:generalized-2}.
We may set $z_v(T_1)=\cdots=z_v(T_4)=z_h(T_0)=1$. Then
\begin{alignat*}{2}
z_h(T_1)&=(t_3-1)[-k_1]_{t_1t_3},   \qquad   &z_h(T_2)&=t_3(t_1-1)[-k_2]_{t_1t_3}, \\
z_h(T_3)&=(t_2-1)[k_1]_{t_1t_2},    \qquad   &z_h(T_4)&=t_2(t_1-1)[k_2]_{t_1t_2},
\end{alignat*}
and $z_v(T_0)=h(1-t_1)$.
Hence
\begin{align*}
z_v(T_1+T_2)&=z_h(T_1)z_h(T_2)\left(\frac{1}{z_h(T_1)}+\frac{1-t_1}{1-t_3^{-1}}\cdot\frac{1}{z_h(T_2)}\right)  \\
&=t_3(t_1-1)([-k_2]_{t_1t_3}-[-k_1]_{t_1t_3}),  \\
z_v(T_3+T_4)&=z_h(T_3)z_h(T_4)\left(\frac{1}{z_h(T_3)}+\frac{1-t_1}{1-t_2^{-1}}\cdot\frac{1}{z_h(T_4)}\right)  \\
&=t_2(t_1-1)([k_2]_{t_1t_2}-[k_1]_{t_1t_2}).
\end{align*}

Consequently, when $k_1\ne k_2$,
\begin{align*}
\frac{z_h(T)}{z_v(T)}&=\frac{z_h(T_1)z_h(T_2)}{z_v(T_1+T_2)}+\frac{z_h(T_0)}{z_v(T_0)}+\frac{z_h(T_3)z_h(T_4)}{z_v(T_3+T_4)}   \\
&=\frac{(t_3-1)[-k_1]_{t_1t_3}[-k_2]_{t_1t_3}}{[-k_2]_{t_1t_3}-[-k_1]_{t_1t_3}}+\frac{1}{h(1-t_1)}
+\frac{(t_2-1)[k_1]_{t_1t_2}[k_2]_{t_1t_2}}{[k_2]_{t_1t_2}-[k_1]_{t_1t_2}},
\end{align*}
implying
\begin{align*}
\Delta_L&\doteq\frac{1}{1-t_1}\frac{z_h(T)}{z_v(T)}z_v(T_1+T_2)z_v(T_3+T_4)z_v(T_0)  \\
&\doteq ht_2t_3(t_1-1)^2\Big((t_3-1)[-k_1]_{t_1t_3}[-k_2]_{t_1t_3}\big([k_2]_{t_1t_2}-[k_1]_{t_1t_2}\big) \\
&\hspace{30mm}+(t_2-1)[k_1]_{t_1t_2}[k_2]_{t_1t_2}\big([-k_2]_{t_1t_3}-[-k_1]_{t_1t_3}\big)\Big) \\
&\ \ \ \ +t_2t_3(1-t_1)\big([-k_2]_{t_1t_3}-[-k_1]_{t_1t_3}\big)\big([k_2]_{t_1t_2}-[k_1]_{t_1t_2}\big).
\end{align*}
When $k_1=k_2$, $z_v(T_1+T_2)=z_v(T_3+T_4)=0$; applying a formula in (R2) twice, we easily find $z_h(T)=0$. so that $\Delta_L=0$.

\begin{rmk}
\rm In this way, we may construct a lot of links $L$ with $\Delta_L=0$.
\end{rmk}

\subsection{Montesinos links}

Each rational tangle $T=[p/q]$ is made of two curves. There are three cases: (1) $p,q$ are both odd; (2) $p$ is odd and $q$ is even; (3) $p$ is even and $q$ is odd; respectively, we say that $T$ has type 1, 2, 3. The way in which the ends of $T$ are connected to each other depends on the type, as shown in Figure \ref{fig:type}.

\begin{figure}[H]
  \centering
  \includegraphics[width=9cm]{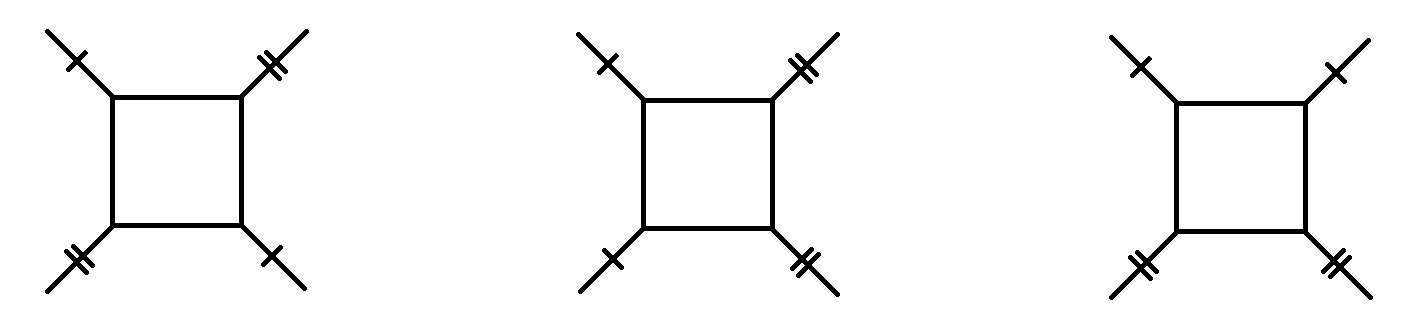}\\
  \caption{Left: if $T$ has type 1, then $T^{\rm nw}$, $T^{\rm se}$ belong to the same curve, as indicated by single-strand line segments, and $T^{\rm ne}$, $T^{\rm sw}$ belong to the other, as indicated by double-strand line segments. Middle: the situation when $T$ has type 2. Right: the situation when $T$ has type 3.}\label{fig:type}
\end{figure}

\begin{figure}[h]
  \centering
  \includegraphics[width=12cm]{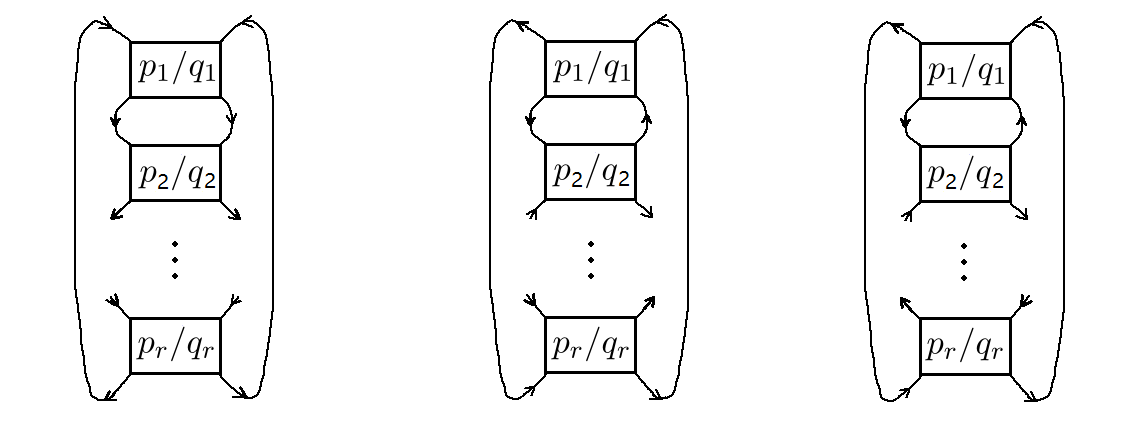}\\
  \caption{Left: $L$ is odd. Middle: $L$ is even, with $n_1(L)$ odd. Right: $L$ is even, with $n_1(L)$ even.
  Here type 2 tangles are not explicitly drawn out.}\label{fig:Montesinos}
\end{figure}

Let $L=D([p_1/q_1]\ast\cdots\ast[p_r/q_r])$ be a Montesinos link, with $r\ge 3$. Let $n_1(L),n_3(L)$ respectively denote the number of tangles of type 1 and 3. Then $L$ is a knot if and only if one of the following occurs.
\begin{enumerate}
  \item If $n_3(L)=0$ and $n_1(L)$ is odd, then $L$ is called odd. No matter the $q_i$'s are odd or even,
        the tangles can be oriented in the way shown in the left part of Figure \ref{fig:Montesinos}.
  \item If $n_3(L)=1$, then $L$ is called even. In this case, we can assume that $2\mid p_r$ and $2\nmid p_i$ for $1\le i<r$.
        When $n_1(L)$ is odd (resp. even), $L$ can be oriented as in the middle (resp. right) part of Figure \ref{fig:Montesinos}.
\end{enumerate}

\begin{thm}\label{thm:Montesinos-knot}
Let $L$ be the Montesinos knot $D([p_1/q_1]\ast\cdots\ast[p_r/q_r])$.
\begin{enumerate}
  \item[\rm(a)] When $L$ is odd, i.e. $n_3(L)=0$ and $n_1(L)$ is odd,
        $$\Delta_L\doteq\frac{1}{t+1}
        \left(\prod_{i=1}^r\big((t+1)z_h(p_i/q_i)+z_v(p_i/q_i)\big)-\prod_{i=1}^rz_v(p_i/q_i)\right).$$
  \item[\rm(b)] When $L$ is even, i.e. $n_3(L)=1$, set $\epsilon_i=(-1)^{q_1+\cdots+q_{i-1}}$, then
        $$\Delta_L\doteq (1-t)\left(\prod_{i=1}^rz_v(p_i/q_i)\right)\cdot\sum_{i=1}^{r}\frac{z_h(p_i/q_i)}{(1-t^{\epsilon_i})z_v(p_i/q_i)}.$$
\end{enumerate}
\end{thm}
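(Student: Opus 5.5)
We apply Theorem \ref{thm:main} with $T=[p_1/q_1]\ast\cdots\ast[p_r/q_r]$, composed left to right as $(\cdots(T_1\ast T_2)\ast\cdots)\ast T_r$ with $T_i=[p_i/q_i]$. Since $L$ is a knot, $\Delta_L\doteq z_h(T)$. By (R2), $z_v(T)\doteq\prod_i z_v(p_i/q_i)$. Assume first that every $z_v(p_i/q_i)\neq 0$; the general case then follows by a continuity/polynomial-identity argument, because both sides are given by polynomial expressions in the $z$'s. Under this assumption, iterating (\ref{eq:fraction-v}) gives
$$\frac{z_h(T)}{z_v(T)}=\Big(\cdots\big(\beta_1\ast_{\mathsf{t}^{(1)}}\beta_2\big)\ast_{\mathsf{t}^{(2)}}\cdots\Big)\ast_{\mathsf{t}^{(r-1)}}\beta_r,\qquad \beta_i=\frac{z_h(p_i/q_i)}{z_v(p_i/q_i)}.$$
Here $\mathsf{t}^{(j)}$ records the colors of the NE, SE, SW ends of the partial composite $S_j=T_1\ast\cdots\ast T_j$. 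These ends coincide with $T_1^{\rm ne}$, $T_j^{\rm se}$ and $T_j^{\rm sw}$ respectively. So the first task is to read off these colors from the orientations in Figure \ref{fig:Montesinos}, using the type pictures in Figure \ref{fig:type}. Each color is $t^{\pm1}$, and the color of $T_1^{\rm ne}$ is fixed. The sign pattern at the bottom of $S_j$ is determined by the parity of the strands passing through.

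\textbf{Odd case (a).} Every tangle has type 1 or 2, and the orientation in the left of Figure \ref{fig:Montesinos} should make $\mathsf{t}^{(j)}$ the same for all $j$. I expect $\mathsf{t}=(t,t^{-1},t)$, as in the illustrating example. With this $\mathsf{t}$ the coefficients are $\frac{1-t^{-1}}{1-t^{-1}}=1$ and $\frac{t-t}{1-t}=0$, which would make the operation purely additive and contradict the product formula. So the colors must instead be such that $\ast_{\mathsf{t}}$ takes the form
$$b_1\ast b_2=b_1+b_2+(t+1)b_1b_2 .$$
I will check this on the figure. Granting it, the substitution $\phi(b)=1+(t+1)b$ turns $\ast$ into multiplication: $\phi(b_1\ast b_2)=\phi(b_1)\phi(b_2)$. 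Hence
$$1+(t+1)\frac{z_h(T)}{z_v(T)}=\prod_i\frac{z_v(p_i/q_i)+(t+1)z_h(p_i/q_i)}{z_v(p_i/q_i)}.$$
Multiplying through by $z_v(T)=\prod_i z_v(p_i/q_i)$ yields (a). The main obstacle is justifying this coefficient pattern uniformly. Which end carries which color depends on whether $T_j$ has type 1 or type 2, and the same $\mathsf{t}$ must arise in every case. I would handle this by checking the four combinations of types for $(S_{j-1},T_j)$ against Figure \ref{fig:type}.

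\textbf{Even case (b).} One tangle has even numerator, and the orientation makes the two colors at the bottom of $S_j$ equal up to the sign $\epsilon_{j+1}$. Reading the figure should give a triple of the form $\mathsf{t}^{(j)}=(t,t^{\epsilon},t^{-\epsilon})$ or similar. In that case the $b_1b_2$ coefficient vanishes, and $\ast$ becomes affine: $b_1+c_j b_2$ with $c_j=\frac{1-t^{-1}}{1-t^{\epsilon_{j+1}}}$, up to the exact exponents. Telescoping then gives
$$\frac{z_h(T)}{z_v(T)}=\sum_i\frac{(1-t)\,\beta_i}{1-t^{\epsilon_i}}$$
up to a unit, which is (b) after multiplying by $\prod_i z_v(p_i/q_i)$. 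For $i=1$ the coefficient $\frac{1-t}{1-t}=1$ is consistent with $\epsilon_1=1$. The delicate point is again tracking $\epsilon_i=(-1)^{q_1+\cdots+q_{i-1}}$: each $q_i$ odd should flip the orientation of the bottom strand passing to the next tangle. I will verify this from Figure \ref{fig:type} and absorb the resulting units into $\doteq$.

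Finally, I will remove the assumption $z_v(p_i/q_i)\neq 0$ by a separate argument. Expand (R2) directly as polynomial identities, and verify that the resulting expressions for $z_h(T)$ agree with the stated ones as polynomials. This can be done by induction on $r$, using the un-divided forms of (R2).
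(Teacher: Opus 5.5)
Your skeleton is exactly the paper's: apply Theorem~\ref{thm:main}, iterate (\ref{eq:fraction-v}) along $S_i=T_1\ast\cdots\ast T_i$, turn $\ast_{\mathsf{t}}$ into multiplication via $b\mapsto 1+(t+1)b$ in (a), and telescope in (b). That algebra is fine. The gap is that the only non-algebraic input, the colour triple $\mathsf{t}=(\Phi(S_{i-1}^{\rm ne}),\Phi(S_{i-1}^{\rm se}),\Phi(S_{i-1}^{\rm sw}))$, is never actually determined. In (a) you infer the coefficients from the answer, and in (b) the triple you write down does not give the formula you then state.

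For (a), with the orientation of Figure~\ref{fig:Montesinos} (left), both strands run through every tangle from top to bottom (type 1: NW$\to$SE and NE$\to$SW; type 2: NW$\to$SW and NE$\to$SE). Since ends are directed outward, $\mathsf{t}=(t^{-1},t,t)$ for every $i$. No case analysis over pairs of types is needed, because only the ends of $S_{i-1}$ enter $\mathsf{t}$. This gives coefficients $\frac{1-t}{1-t}=1$ and $\frac{t-t^{-1}}{1-t^{-1}}=t+1$. Your first guess $(t,t^{-1},t)$ has mutually inverse colours at SE and SW, i.e.\ one strand going up. That is the even pattern, which is why it came out additive.

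For (b), since $T_r$ has type 3 (its two north ends lie on one curve), each $T_i$ with $i<r$ carries one strand down and one up. Hence $\Phi(S_{i-1}^{\rm sw})=\Phi(S_{i-1}^{\rm se})^{-1}$ and the quadratic term vanishes, as you say. But the linear coefficient must be normalized correctly. Because $S_{i-1}^{\rm ne}=T_1^{\rm ne}$ and $S_{i-1}^{\rm se}=(T_i^{\rm ne})^{-1}$,
\[
\frac{1-t_{\rm ne}^{-1}}{1-t_{\rm se}}=\frac{1-\Phi(T_1^{\rm ne})^{-1}}{1-\Phi(T_i^{\rm ne})^{-1}},
\]
which equals $1$ exactly when $T_i^{\rm ne}$ has the same colour as $T_1^{\rm ne}$. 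With $\Phi(T_1^{\rm ne})=t^{-1}$ and $\Phi(T_i^{\rm ne})=t^{-\epsilon_i}$, this is $\frac{1-t}{1-t^{\epsilon_i}}$. Here $\epsilon_i=-\epsilon_{i-1}$ when $T_{i-1}$ joins NE to SW (type 1, $q_{i-1}$ odd), and $\epsilon_i=\epsilon_{i-1}$ when it joins NE to SE (type 2, $q_{i-1}$ even).

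Your triple $(t,t^{\epsilon_{j+1}},t^{-\epsilon_{j+1}})$ pairs the NE colour with the wrong sign convention. Your $c_j=\frac{1-t^{-1}}{1-t^{\epsilon_{j+1}}}$ equals $-t^{-1}$ for a summand with $\epsilon_i=1$, $i\ge 2$, while $\beta_1$ (also with $\epsilon_1=1$) has coefficient $1$. These discrepancies are units on some summands and not on others, so they cannot be ``absorbed into $\doteq$''. Because the answer is a sum, the normalization has to be tracked term by term.

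A minor point: the non-vanishing of $z_v(p_i/q_i)$ is automatic. By Remark~\ref{rmk:rational}, it specializes to $\pm p_i\neq 0$ at $t=-1$, so no continuity argument is needed (your fallback induction on the undivided (R2) would also work).
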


\begin{proof}
For $1\le i\le r$, let
$$S_i=[p_1/q_1]\ast\cdots\ast[p_i/q_i]),  \qquad  g_i=\frac{z_h(p_i/q_i)}{z_v(p_i/q_i)},     \qquad      u_i=\frac{z_h(S_i)}{z_v(S_i)}.$$

(a) For each $2\le i\le r$, when vertically composing $S_{i-1}$ with $[p_i/q_i]$, we have
$\Phi(S_{i-1}^{\rm ne})=t^{-1}$, $\Phi(S_{i-1}^{\rm se})=\Phi(S_{i-1}^{\rm sw})=t$, hence by (\ref{eq:fraction-v}),
$$u_{i}=u_{i-1}+g_i+(t+1)u_{i-1}g_i.$$
Consequently,
$$u_r=\frac{1}{t+1}\prod_{i=1}^r\big((t+1)g_i+1\big)-\frac{1}{t+1}.$$
Then the formula for $\Delta_L$ follows by Theorem \ref{thm:main}.

(b) For each $2\le i\le r$, when vertically composing $S_{i-1}$ with $[p_i/q_i]$, we have
$\Phi(S_{i-1}^{\rm ne})=t^{-1}$, $\Phi(S_{i-1}^{\rm se})=t^{\varepsilon_i}$, $\Phi(S_{i-1}^{\rm sw})=t^{-\varepsilon_i}$,
for some $\varepsilon_i\in\{\pm1\}$, hence by (\ref{eq:fraction-v}),
$$u_{i}=u_{i-1}+\frac{1-t}{1-t^{\varepsilon_i}}g_i.$$
Set $\varepsilon_1=1$. To determine $\varepsilon_i$ for $2\le i\le r$, it suffices to know the color of
$[p_{i-1}/q_{i-1}]^{\rm se}=([p_i/q_i]^{\rm ne})^{-1}$ specified by the orientation given in Figure \ref{fig:Montesinos}. Observe that if $q_{i-1}$ is odd (resp. even), then $\Phi([p_i/q_i]^{\rm ne})$ is inverse to (resp. the same as) $\Phi([p_{i-1}/q_{i-1}]^{\rm ne})$, so that $\varepsilon_{i}=-\varepsilon_{i-1}$ (resp. $\varepsilon_{i}=\varepsilon_{i-1}$). Consequently, $\varepsilon_i=(-1)^{q_1+\cdots+q_{i-1}}=\epsilon_i$.
Thus,
$$u_r=\sum_{i=1}^{r}\frac{1-t}{1-t^{\epsilon_i}}g_i,$$
and the formula for $\Delta_L$ follows.
\end{proof}

As a corollary, we recover \cite[Theorem 1]{Be25}:
\begin{cor}\label{cor:pretzel-knot}
Let $L$ be the pretzel knot $D([p_1]\ast\cdots\ast[p_r])$.
\begin{enumerate}
  \item[\rm(a)] When $r$ and all the $p_i$'s are odd,
        $$\Delta_L\doteq\frac{1}{2^{r-1}}\sum_{k=0}^{(r-1)/2}\sigma_{2k}(p_1,\ldots,p_r)(t+1)^{r-1-2k}(t-1)^{2k},$$
        where $\sigma_{2k}(p_1,\ldots,p_r)$ is the $2k$-th elementary polynomial in $p_1,\ldots,p_r$.
  \item[\rm(b)] When $r$ is even, $p_r=2h$, and $p_i$ is odd for $1\le i<r$,
        $$\Delta_L\doteq \left(\prod_{i=1}^{r-1}\frac{1+t^{p_i}}{1+t}\right)\left(t^{h}+(t^{h}-t^{-h})
        \left(\sum_{i=1}^{r-1}\frac{t^{p_i}}{1+t^{p_i}}-\frac{r}{2}\right)\right).$$
  \item[\rm(c)] When $r$ is odd, $p_r=2h$, and $p_i$ is odd for $1\le i<r$,
        $$\Delta_L\doteq \left(\prod_{i=1}^{r-1}\frac{1+t^{p_i}}{1+t}\right)\left(1+h(t^{-1}-t)
        \left(\sum_{i=1}^{r-1}\frac{t^{p_i}}{1+t^{p_i}}-\frac{r-1}{2}\right)\right).$$
\end{enumerate}
\end{cor}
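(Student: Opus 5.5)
The plan is to specialize Theorem \ref{thm:Montesinos-knot} to the case where every tangle is an integer tangle $[p_i]$. Here $q_i=1$, so under Convention \ref{conv:zh} we have $z_h(p_i)=1$, and $z_v(p_i)$ is read off from (R1) using the colors fixed by the orientation in Figure \ref{fig:Montesinos}. Everything then reduces to evaluating $[h]_a$ at the relevant specializations and simplifying.

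\textbf{Part (a).} All $p_i$ are odd, so every tangle has type 1 and $n_1(L)=r$ is odd. From the orientation in the left part of Figure \ref{fig:Montesinos}, one reads off the colors of the ends: $t_{\rm ne}t_{\rm se}=1$, i.e. the two strands are oppositely oriented, which is forced since $\Phi(S^{\rm ne})=t^{-1}$ and $\Phi(S^{\rm se})=t$.

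\begin{itemize}
\item With $p_i=2h-1$, (R1) gives $z_v(p_i)=(1-t)h-1$, noting $[h]_1=h$ for every sign of $h$.
\item Hence $(t+1)z_h+z_v=(1-t)h+t$. Writing $h=(p_i+1)/2$ gives $(t+1)/2+p_i(1-t)/2$.
\item Likewise $z_v(p_i)=-(t+1)/2+p_i(1-t)/2$.
\item Expanding both products of part (a) of Theorem \ref{thm:Montesinos-knot} in elementary symmetric polynomials, the terms with an even number of $p_i$ factors cancel in pairs (up to the sign coming from odd $r$). Only the terms $\sigma_{2k}$, which arise from odd total degree $r-2k$ in $(t+1)$, survive with a factor of $2$.
\item Dividing by $t+1$ and $2^r$ should give the stated formula.
\end{itemize}

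The one point to check carefully is the sign: we need $\prod(a+b_i)-\prod(-a+b_i)=2\sum_{j\text{ odd}}a^j\sigma_{r-j}(b)$. With $b_i=p_i(1-t)/2$, the $(1-t)^{2k}$ term equals $(t-1)^{2k}$ since the exponent is even. So this works.

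\textbf{Parts (b) and (c).} Here $n_3(L)=1$ with $p_r=2h$, so part (b) of Theorem \ref{thm:Montesinos-knot} applies, with $\epsilon_i=(-1)^{i-1}$ because all $q_i=1$. This gives
$$\Delta_L\doteq(1-t)\prod_i z_v(p_i)\sum_i \frac{1}{(1-t^{\epsilon_i})z_v(p_i)}.$$

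\begin{itemize}
\item The main work is to determine the colors of $[p_i]^{\rm ne}$ and $[p_i]^{\rm se}$ in the middle and right parts of Figure \ref{fig:Montesinos}. For odd $p_i$, the two strands of $[p_i]$ are parallel, so $t_{\rm ne}t_{\rm se}=t^{\pm2}$. Then (R1) with $p_i=2h_i-1$ gives $z_v=(1-t^{\pm1})[h_i]_{t^{\pm2}}-1$, which collapses up to units to $-(1+t^{\pm p_i})/(1+t^{\pm1})$.
\item For $p_r=2h$ the strands are antiparallel, so $z_v(p_r)=t^{a}(t^{b}-1)h$ with $b=\epsilon_r$. This produces the factor $h(t^{-1}-t)$ in (c). In (b), since $r$ is even, $\epsilon_r=-1$.
\item The picture also forces the orientation of the last tangle to interact with the parity of $r$. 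I expect $p_r$ to be antiparallel in (c) and parallel in (b): for even $r$ the pattern makes the twisted strands parallel, so $z_v(p_r)=(1-t)[h]_{t^2}t^{\pm1}$-type, giving $(t^{h}-t^{-h})$ up to units. This is consistent with the $t^h$ terms in (b).
\item Substituting, the factor $1/((1-t^{\epsilon_i})z_v(p_i))$ for odd $i$ becomes, up to a common normalization, $t^{p_i}/(1+t^{p_i})$ plus constants. Summing yields $\sum t^{p_i}/(1+t^{p_i})$. The constants $-r/2$ and $-(r-1)/2$ come from rewriting $1/(1+t^{-p_i})=1-t^{p_i}/(1+t^{p_i})$ for alternate $i$.
\item Finally, multiply through by $\prod z_v$ and factor out $\prod(1+t^{p_i})/(1+t)$.
\end{itemize}

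The main obstacle is the bookkeeping of signs and unit factors $\pm t^k$ coming from the alternating $\epsilon_i$ and the differing orientations, since each summand carries a different unit. I would fix the convention that all odd tangles have $z_v$ normalized as $(1+t^{p_i})/(1+t)$ times an explicit unit, and track those units term by term, using Convention \ref{conv:zh} to keep $z_h=1$.
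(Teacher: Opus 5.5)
Your route is the paper's: specialize Theorem \ref{thm:Montesinos-knot} with $q_i=1$ and $z_h(p_i)=1$. Part (a) is correct and follows the paper's computation. There is one verbal slip: the terms that cancel are those with an \emph{odd} number of $p_i$-factors, i.e.\ even powers of $a=(t+1)/2$. Your displayed identity $\prod(a+b_i)-\prod(-a+b_i)=2\sum_{j\ \mathrm{odd}}a^j\sigma_{r-j}(b)$ is the correct one.

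For (b) and (c), what you wrote is not yet a proof, and some specific claims are wrong.

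(1) Each summand $z_h(p_i)/\big((1-t^{\epsilon_i})z_v(p_i)\big)$ in Theorem \ref{thm:Montesinos-knot}(b) changes if $z_v(p_i)$ alone is rescaled by a unit, so ``collapses up to units'' is not enough. You need the exact value. The colors are $t_{\rm ne}=t_{\rm se}=t^{-\epsilon_i}$, so $z_v(p_i)=-(1+t^{-\epsilon_ip_i})/(1+t^{\epsilon_i})$, which gives
\[
\frac{1}{(1-t^{\epsilon_i})z_v(p_i)}=\frac{t+1}{t-1}\Big(\frac{t^{p_i}}{1+t^{p_i}}-\frac{1+(-1)^i}{2}\Big),\qquad 1\le i<r.
\]
So only even $i$ produce a constant, via $-1/(1+t^{p_i})=t^{p_i}/(1+t^{p_i})-1$. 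Your identity $1/(1+t^{-p_i})=1-t^{p_i}/(1+t^{p_i})$ is false; the left side equals $t^{p_i}/(1+t^{p_i})$.

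(2) Your second and third bullets contradict each other about $[p_r]$, and the correct statement should be derived, not ``expected''. The end $[p_r]^{\rm se}$ is joined by the closure to $[p_1]^{\rm ne}$, whose color is $t^{-1}$, so $t_{\rm se}=t$. Meanwhile $t_{\rm ne}=t^{-\epsilon_r}$, as for the other tangles. Hence in (b) the strands are parallel and $z_v(p_r)=t(t^{2h}-1)/(1+t)$. In (c) they are antiparallel and $z_v(p_r)=h(1-t^{-1})$. Your blanket claim ``antiparallel, $z_v(p_r)=t^a(t^{\epsilon_r}-1)h$'' is wrong for (b).

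(3) In (b) there are only $(r-2)/2$ even indices below $r$. With $\Sigma=\sum_{i<r}\frac{t^{p_i}}{1+t^{p_i}}$, the assembly gives
\[
\Delta_L\doteq\prod_{i<r}\frac{1+t^{p_i}}{1+t}\Big(1+(t^{2h}-1)\big(\Sigma-\frac{r-2}{2}\big)\Big).
\]
The constant $-r/2$ appears only after writing $1=t^{2h}-(t^{2h}-1)$ and dividing by $t^h$. In (c) the count $(r-1)/2$ gives the constant directly, and the identity $(1-t)\,h(1-t^{-1})\,\frac{t+1}{t-1}=h(t^{-1}-t)$ gives the prefactor.
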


\begin{proof}
Recall Convention \ref{conv:zh} that $z_h(p_i)=1$ for each $i$.

(a) Suppose $p_i=2h_i-1$. Then $z_v(p_i)=h_i(1-t)-1$. Hence
\begin{align*}
\Delta_L&\doteq\frac{1}{t+1}\left(\prod_{i=1}^r(t+h_i(1-t))-\prod_{i=1}^r(h_i(1-t)-1)\right)  \\
&\doteq\frac{1}{2^r(t+1)}\left(\prod_{i=1}^r(t+1-p_i(t-1))+\prod_{i=1}^r(t+1+p_i(t-1))\right)  \\
&=\frac{1}{2^{r-1}}\sum_{k=0}^{(r-1)/2}\sigma_{2k}(p_1,\ldots,p_r)(t+1)^{r-1-2k}(t-1)^{2k}.
\end{align*}

(b) Note that $\epsilon_i=(-1)^{i-1}$, and
\begin{align*}
z_v(p_i)&=(1-t^{-\epsilon_i})\Big[\frac{p_i+1}{2}\Big]_{t^{-2\epsilon_i}}-1=-\frac{1+t^{-p_i\epsilon_i}}{1+t^{\epsilon_i}}, \quad 1\le i<r,  \\
z_v(p_r)&=t(t-1)[h]_{t^2}=\frac{t(t^{2h}-1)}{1+t}.
\end{align*}
Hence for $1\le i<r$,
$$\frac{z_h(p_i)}{(1-t^{\epsilon_i})z_v(p_i)}=\frac{t+1}{t-1}\left(\frac{t^{p_i}}{1+t^{p_i}}-\frac{(-1)^i+1}{2}\right).$$
Then the formula follows.

(c) In this case, $z_v(p_r)=h(1-t^{-1})$, and the remaining steps are similar as (b).
\end{proof}

\medskip

Now we turn to Montesinos links with at least two components.

There are two possibilities:
\begin{enumerate}
  \item[\rm(a)] If $n_3(L)=0$ and $n_1(L)$ is even, then $L$ has 2 components. We orient $L$ in such a way that
        $\Phi([p_1/q_1]^{\rm ne})=t_1^{-1}$, and $\Phi([p_1/q_1]^{\rm se})=t_2.$
  \item[\rm(b)] If $n:=n_3(L)\ge 2$, then $L$ has $n$ components. Each type 3 tangle is made of two curves belonging to different components.
        Suppose $2\mid p_i$ exactly for $i=r_1,\ldots,r_n$, with $1\le r_1<\cdots<r_n=r$. Set $r_0=0$.
        Numerate the components of $L$ so that the $k$-th one is made of the curve of $[p_{r_{k-1}}/q_{r_{k-1}}]$ containing the south ends, $[p_i/q_i]$ for $r_{k-1}<i\le r_k$, and the curve of $[p_{r_k}/q_{k_k}]$ containing the north ends. Orient the $k$-th component in such a way that $\Phi([p_{r_{k-1}}/q_{r_{k-1}}]^{\rm se})=t_k$.
\end{enumerate}

\begin{thm}\label{thm:Montesinos-link}
Let $L$ be the Montesinos link $D([p_1/q_1]\ast\cdots\ast[p_r/q_r])$.
\begin{enumerate}
  \item[\rm(a)] When $n_3(L)=0$ and $n_1(L)$ is even, let $\nu_1=1$, and for $1<i\le r$, let $\nu_i=1$ (resp. $\nu_i=2$)
        if $q_1+\cdots+q_{i-1}$ is even (resp. odd). Then
        $$\Delta_L\doteq\frac{1}{t_1t_2-1}\left(\prod_{i=1}^r
        \left(\frac{t_1t_2-1}{t_{\nu_i}-1}z_h(p_i/q_i)+z_v(p_i/q_i)\right)-\prod_{i=1}^rz_v(p_i/q_i)\right).$$
  \item[\rm(b)] Suppose $n=n_3(L)\ge 2$. For $i$ with $r_{k-1}<i\le r_k$, set $\nu_i=k$ and $\epsilon_i=(-1)^{q_{r_{k-1}+1}+\cdots+q_{i-1}}$.
        Then
        \begin{align*}
        \Delta_L\doteq\prod_{i=1}^rz_v(p_i/q_i)\cdot\sum_{i=1}^r\frac{z_h(p_i/q_i)}{(1-t_{\nu_i}^{\epsilon_i})z_v(p_i/q_i)}.
        \end{align*}
\end{enumerate}
\end{thm}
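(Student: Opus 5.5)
The plan mirrors the proof of Theorem \ref{thm:Montesinos-knot}. Put $S_i=[p_1/q_1]\ast\cdots\ast[p_i/q_i]$, $g_i=z_h(p_i/q_i)/z_v(p_i/q_i)$ and $u_i=z_h(S_i)/z_v(S_i)$. By (\ref{eq:fraction-v}) we have $u_i=u_{i-1}\ast_{\mathsf{t}}g_i$, where $\mathsf{t}$ is the triple of colors of $S_{i-1}^{\rm ne},S_{i-1}^{\rm se},S_{i-1}^{\rm sw}$. Since $z_v(S_r)\doteq\prod_i z_v(p_i/q_i)$, Theorem \ref{thm:main} gives
$$\Delta_L\doteq \frac{u_r\prod_i z_v(p_i/q_i)}{1-\Phi(S_r^{\rm ne})}.$$
So the work is to read off the three colors at each step and then solve the recursion. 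If some $z_v(p_i/q_i)=0$, we first work generically, with $z_v,z_h$ as polynomials in the colors treated as indeterminates, and clear denominators. The final identity is polynomial, so it persists under specialization; alternatively we use the polynomial form of (R2) directly.

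\textbf{Case (a).} Here $S_{i-1}^{\rm ne}$ always lies on the curve through the NW/NE ends, which is the same component throughout. With the stated orientation, $\Phi(S_{i-1}^{\rm ne})=t_1^{-1}$. Since $n_3=0$, each tangle passes the south strands across without changing the component. A type 2 tangle (even $q$) keeps the SE color, and a type 1 tangle (odd $q$) swaps the roles of the two components, exactly as in the sign analysis for $\epsilon_i$ in the knot case. From this I expect $\Phi(S_{i-1}^{\rm se})=t_{\nu_i}$ and $\Phi(S_{i-1}^{\rm sw})=t_{\nu'_i}^{-1}$, where $\{\nu_i,\nu'_i\}=\{1,2\}$. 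This specific assignment needs checking against Figure \ref{fig:type}. The coefficients of $\ast_{\mathsf t}$ then become
$$\frac{1-t_1}{1-t_{\nu_i}}\quad\text{and}\quad \frac{t_{\nu'_i}^{-1}-t_{\nu_i}^{-1}}{1-t_{\nu_i}^{-1}}.$$
With a suitable normalization $w_i=c\,u_i$ the recursion should take the shape $w_i+1=(w_{i-1}+1)(a_ig_i+1)$, where $a_i=(t_1t_2-1)/(t_{\nu_i}-1)$. This gives
$$u_r\propto \frac{\prod_i(a_ig_i+1)-1}{t_1t_2-1}.$$
Multiplying by $\prod z_v$ and dividing by $1-t_1^{-1}$ should absorb the normalization constant up to a unit, yielding formula (a). The main obstacle is bookkeeping. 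The factor $1-t_1$ appearing in the first coefficient must combine with $1/(1-\Phi(S_r^{\rm ne}))$, and the mixed coefficient must come out proportional to $(t_1t_2-1)/(t_{\nu_i}-1)$ after rescaling. Should a direct rescaling fail, I would instead verify inductively that
$$u_i=\frac{1}{1-t_1}\cdot\frac{\prod_{j\le i}(a_jg_j+1)-1}{\lambda}$$
for the appropriate $\lambda$, checking the two parity cases of $\nu_i$ separately.

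\textbf{Case (b).} Whenever $p_{i-1}$ is even, the composition point separates components. In every step the NE and SW ends of $S_{i-1}$ then carry the same color up to inversion, so the quadratic coefficient $(t_{\rm sw}-t_{\rm se}^{-1})/(1-t_{\rm se}^{-1})$ vanishes. This happens exactly as in Theorem \ref{thm:Montesinos-knot}(b): the SW and SE ends of $S_{i-1}$ lie on the same component $\nu_i$, with $\Phi(S_{i-1}^{\rm se})=t_{\nu_i}^{\epsilon_i}$ and $\Phi(S_{i-1}^{\rm sw})=t_{\nu_i}^{-\epsilon_i}$. The sign $\epsilon_i$ flips with each odd $q$ and resets to $+1$ after each even-$p$ tangle, by the chosen orientation $\Phi([p_{r_{k-1}}/q_{r_{k-1}}]^{\rm se})=t_k$. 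Also $\Phi(S_{i-1}^{\rm ne})=t_n^{-1}$, the last component closing through the north. Hence
$$u_i=u_{i-1}+\frac{1-t_n}{1-t_{\nu_i}^{\epsilon_i}}g_i,\qquad u_r=(1-t_n)\sum_i\frac{g_i}{1-t_{\nu_i}^{\epsilon_i}}.$$
Dividing by $1-\Phi(S_r^{\rm ne})=1-t_n^{-1}\doteq 1-t_n$ gives (b). The delicate point is verifying the colors at the component boundaries $i=r_k+1$.
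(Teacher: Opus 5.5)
Your strategy is the paper's: read off the colors of $S_{i-1}^{\rm ne},S_{i-1}^{\rm se},S_{i-1}^{\rm sw}$, iterate (\ref{eq:fraction-v}), solve the recursion, and divide by $1-\Phi(S_r^{\rm ne})$. The genuine gap is in case (a): your guessed color $\Phi(S_{i-1}^{\rm sw})=t_{\nu'_i}^{-1}$ is wrong, and the step built on it fails.

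When $\nu_i=2$, your triple would force $t_{\rm nw}=t_1^2t_2^{-1}$ through $t_{\rm nw}t_{\rm ne}t_{\rm se}t_{\rm sw}=1$, which is not a color. Your coefficients also make the ratio of the quadratic to the linear coefficient of $\ast_{\mathsf t}$ equal to $\frac{t_{\nu'_i}-t_{\nu_i}}{t_{\nu'_i}(1-t_1)}$. This depends on $\nu_i$; even its sign flips. A rescaling $w_i=cu_i$ with $w_i+1=(w_{i-1}+1)(a_ig_i+1)$ requires this ratio to be a constant $c$ and the quadratic coefficient to equal $a_i$. Neither holds, so neither the rescaling nor your inductive fallback can produce the product formula.

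The missing observation is that when $n_3(L)=0$, every curve of every tangle (type 1 or 2) joins a north end to a south end. So with the prescribed orientation all strands run downward. Hence $t_{\rm ne}=t_1^{-1}$ and $t_{\rm nw}=t_2^{-1}$, and both south ends of $S_{i-1}$ are outgoing: $t_{\rm se}=t_{\nu_i}$ and $t_{\rm sw}=t_{\nu'_i}$ with no inversion, so $t_{\rm se}t_{\rm sw}=t_1t_2$. The quadratic coefficient is then $\frac{t_{\rm se}t_{\rm sw}-1}{t_{\rm se}-1}=\frac{t_1t_2-1}{t_{\nu_i}-1}=a_i$. The linear one is $\frac{1-t_1}{1-t_{\nu_i}}=\frac{t_1-1}{t_1t_2-1}\,a_i$. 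So $c=\frac{t_1t_2-1}{t_1-1}$ works uniformly in $i$; for $i=1$ this is exactly because $\nu_1=1$. This gives $u_r=\frac{t_1-1}{t_1t_2-1}\big(\prod_i(a_ig_i+1)-1\big)$, as in the paper, and dividing $u_r\prod_i z_v(p_i/q_i)$ by $1-t_1^{-1}$ yields (a).

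In case (b), your mechanism is right: SW and SE of $S_{i-1}$ lie on component $\nu_i$ with mutually inverse colors, so the quadratic term vanishes, and $\epsilon_i$ resets after each even-$p$ tangle. However, $\Phi(S_{i-1}^{\rm ne})$ is $t_1^{-1}$, not $t_n^{-1}$. The top ends of $[p_1/q_1]$ are joined by the closure to the south curve of $[p_r/q_r]$, which belongs to component $1$. This is not cosmetic: $u_1=g_1$ fits the uniform term $\frac{1-t_{\rm ne}^{-1}}{1-t_{\nu_1}^{\epsilon_1}}g_1$ only if $t_{\rm ne}=t_1^{-1}$. With $t_n$, your closed form $u_r=(1-t_n)\sum_i\frac{g_i}{1-t_{\nu_i}^{\epsilon_i}}$ has the wrong first summand when $n\ge 2$. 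With $t_1$ in its place, the division by $1-t_1^{-1}$ gives (b).
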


\begin{proof}
(a) Adopt the notations $g_i,u_i$ introduced in the proof of Theorem \ref{thm:Montesinos-knot}. This time,
$$u_i=u_{i-1}+\frac{1-t_1}{1-t_{\nu_i}}g_i+\frac{t_1t_2-1}{t_{\nu_i}-1}u_{i-1}g_i.$$
Hence
$$u_r=\frac{t_1-1}{t_1t_2-1}\left(\prod_{i=1}^r\left(\frac{t_1t_2-1}{t_{\nu_i}-1}g_i+1\right)-1\right).$$
Then the formula follows.

(b) Similarly as the proof of Theorem \ref{thm:Montesinos-knot} (b).
\end{proof}

Specializing to pretzel links, we obtain explicit formulas:
\begin{cor}\label{cor:pretzel-link}
Let $L$ be the pretzel link $D([p_1]\ast\cdots\ast[p_r])$.
\begin{enumerate}
  \item[\rm(a)] When $r$ is even and $p_i=2h_i-1$ for $1\le i\le r$,
        \begin{align*}
        \Delta_L\doteq \frac{1}{t_1t_2-1}
        \Bigg(&\prod_{k=1}^{r/2}((t_1-1)[h_{2k-1}]_{t_2/t_1}-t_1)((t_2-1)[h_{2k}]_{t_1/t_2}-t_2)    \\
        &-\prod_{k=1}^{r/2}((t_2-1)[h_{2k-1}]_{t_2/t_1}+1)((t_1-1)[h_{2k}]_{t_1/t_2}+1)\Bigg).
        \end{align*}
  \item[\rm(b)] When $n=n_3(L)\ge 2$ and $p_{r_k}=2\ell_k$ for $1\le k\le n$ with $1\le r_1<\cdots<r_n=r$, and all the other $p_i$'s are odd,
        \begin{align*}
        \Delta_L\doteq&\left(\prod_{k=1}^n\mathfrak{g}_k\cdot\prod_{i=r_{k-1}+1}^{r_k}\frac{t_k^{p_i}+1}{t_k+1}\right)  \\
        &\cdot\sum_{k=1}^n\left(\frac{1}{(1-t_k^{\nu_k})\mathfrak{g}_k}-\Big\lfloor\frac{r_k-r_{k-1}}{2}\Big\rfloor
        +\sum_{i=r_{k-1}+1}^{r_k}\frac{(1+t_k)t_k^{p_i}}{(1-t_k)(1+t_k^{p_i})}\right),
        \end{align*}
        where $\nu_k=(-1)^{r_k-r_{k-1}}$ and $\mathfrak{g}_k=(t_{k+1}-1)[\ell_k]_{t_k^{\nu_k}t_{k+1}}$.
\end{enumerate}
\end{cor}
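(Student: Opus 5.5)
Corollary \ref{cor:pretzel-link} follows by specializing Theorem \ref{thm:Montesinos-link} to $q_i=1$ for all $i$. Then every $[p_i]$ is an integer tangle and $z_h(p_i)=1$ by Convention \ref{conv:zh}. All that remains is to compute $z_v(p_i)$ from (R1), using the colors of the ends of each $[p_i]$ forced by the chosen orientation.

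\emph{Part (a).} All $p_i$ are odd and $r$ is even, so $n_1(L)=r$ and $n_3(L)=0$. Since $q_1+\cdots+q_{i-1}=i-1$, we get $\nu_i=1$ for odd $i$ and $\nu_i=2$ for even $i$. I will track the colors of $[p_i]^{\rm ne}$ and $[p_i]^{\rm se}$ along the chain:
\begin{itemize}
\item for $[p_1]$ they are $t_1^{-1}$ and $t_2$;
\item each odd integer tangle joins NW with SE and NE with SW, so the two strands exchange roles at each step;
\item hence for odd $i$ the pair is $(t_1^{-1},t_2)$, and for even $i$ it is $(t_2^{-1},t_1)$.
\end{itemize}
Plugging $k=2h_i-1$ into (R1) gives $z_v(p_i)=(1-t_{\rm se})[h_i]_{t_{\rm ne}t_{\rm se}}-1$. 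This equals $(1-t_2)[h_i]_{t_2/t_1}-1$ for odd $i$ and $(1-t_1)[h_i]_{t_1/t_2}-1$ for even $i$.

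Next I form the factor $\frac{t_1t_2-1}{t_{\nu_i}-1}+z_v(p_i)$ appearing in Theorem \ref{thm:Montesinos-link}(a). For odd $i$ I expect to rewrite it, up to the global normalization, as a multiple of $(t_1-1)[h_i]_{t_2/t_1}-t_1$. For this I use the identity $(t_2/t_1-1)[h]_{t_2/t_1}=(t_2/t_1)^h-1$ to trade $(1-t_2)$-type coefficients for $(t_1-1)$-type ones. Concretely, I will check that
\[
\frac{t_1t_2-1}{t_1-1}+(1-t_2)[h]_{t_2/t_1}-1
\]
equals a unit times $\frac{1}{t_1-1}\bigl((t_1-1)[h]_{t_2/t_1}-t_1\bigr)$ after suitable rescaling. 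If this fails, the fallback is to reorder which strand is called $t_1$, or to absorb the factors $(t_{\nu_i}-1)^{-1}$ into an overall unit. Multiplying out and grouping consecutive pairs $(2k-1,2k)$ then gives the stated products. I expect this bookkeeping to be the main obstacle: matching the exact shape of each factor, including the signs and powers hidden in $\doteq$, and checking that the prefactors $\prod(t_{\nu_i}-1)$ are the same for both products so they cancel as units. I would test the result on small cases such as $r=2$, where it should give a $(2,\cdot)$-torus-type link.

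\emph{Part (b).} Here I apply Theorem \ref{thm:Montesinos-link}(b) with $q_i=1$, so $\epsilon_i=(-1)^{i-r_{k-1}-1}$ within the $k$-th block. For the odd $p_i$ inside block $k$ (the single color $t_k$), the computation is exactly that of Corollary \ref{cor:pretzel-knot}(b): $z_v(p_i)=-(1+t_k^{-p_i\epsilon_i})/(1+t_k^{\epsilon_i})$. Up to units this is $\frac{t_k^{p_i}+1}{t_k+1}$, and
\[
\frac{1}{(1-t_k^{\epsilon_i})z_v(p_i)}=\frac{1+t_k}{1-t_k}\Bigl(\frac{t_k^{p_i}}{1+t_k^{p_i}}-\frac{(-1)^{i-r_{k-1}}+1}{2}\Bigr).
\]
Summing the constant terms over the odd-indexed $i$ in the block produces the $\lfloor (r_k-r_{k-1})/2\rfloor$ correction. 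The factor $\frac{1+t_k}{1-t_k}$ multiplying it must be absorbed consistently; I would recheck whether the stated formula normalizes this correctly and adjust the grouping accordingly.

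For the even tangle $[p_{r_k}]=[2\ell_k]$, its NE/SE ends carry colors $t_k^{\pm1}$ and $t_{k+1}$, with sign $\epsilon_{r_k}=\nu_k=(-1)^{r_k-r_{k-1}-1}$, up to the convention flip. So (R1) gives $z_v=t_{\rm ne}(t_{k+1}-1)[\ell_k]_{t_k^{\nu_k}t_{k+1}}\doteq\mathfrak{g}_k$, and its summand is $1/((1-t_k^{\nu_k})\mathfrak{g}_k)$. Multiplying by $\prod_i z_v(p_i)$ finishes the argument, modulo tracking units.
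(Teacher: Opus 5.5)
Your strategy is the paper's: put $q_i=1$ in Theorem~\ref{thm:Montesinos-link}, use $z_h(p_i)=1$, and read $z_v(p_i)$ off (R1) with the end colours forced by the orientation. Your colour bookkeeping in (a) is correct. But in both parts, the step you defer as ``bookkeeping'' does not go the way you predict.

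In (a), the odd-$i$ factor is
\[
\frac{t_1t_2-1}{t_1-1}+(1-t_2)[h]_{t_2/t_1}-1=-\frac{t_2-1}{t_1-1}\big((t_1-1)[h]_{t_2/t_1}-t_1\big),
\]
and the even-$i$ factor is the same with $t_1\leftrightarrow t_2$. The prefactors $\frac{t_2-1}{t_1-1}$ and $\frac{t_1-1}{t_2-1}$ are not units, and they occur only in the first product. So neither of your fallbacks works: you cannot absorb $(t_{\nu_i}-1)^{-1}$ into a unit, and you cannot match these factors against the second product. What actually happens is that they cancel within each pair $(2k-1,2k)$. Meanwhile $z_v(p_i)=-\big((t_2-1)[h_i]_{t_2/t_1}+1\big)$ (resp.\ with $t_1\leftrightarrow t_2$) gives the sign $(-1)^r=1$ in the second product. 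This is where ``$r$ even'' enters, and with it (a) follows exactly.

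Part (b) has a genuine gap, and in fact the displayed formula cannot be reached. Within block $k$ you have $\epsilon_i=(-1)^{i-r_{k-1}-1}$, so $\epsilon_{r_k}=-\nu_k$, not $\nu_k$. You also may not drop the unit $t_{\rm ne}=t_k^{\nu_k}$ from a single summand. Done correctly, $z_v(p_{r_k})=t_k^{\nu_k}\mathfrak{g}_k$ (as in the paper), and the summand is $1/\big((t_k^{\nu_k}-1)\mathfrak{g}_k\big)$. Likewise the odd summands carry $\frac{t_k+1}{t_k-1}$, as in Corollary~\ref{cor:pretzel-knot}(b), not $\frac{1+t_k}{1-t_k}$; your two sign slips cancel only by accident.

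More seriously, the constant term is wrong. The odd tangles of block $k$ are those with $j=i-r_{k-1}\in\{1,\dots,r_k-r_{k-1}-1\}$, and the constant $1$ occurs for even $j$. So the count is $\lfloor (r_k-r_{k-1}-1)/2\rfloor$, and it keeps the non-unit factor $\frac{1+t_k}{1-t_k}$, which nothing can absorb. Assembling the paper's own ingredients gives
\begin{align*}
\Delta_L\doteq{}&\prod_{k=1}^n\Big(\mathfrak{g}_k\prod_{i=r_{k-1}+1}^{r_k-1}\frac{t_k^{p_i}+1}{t_k+1}\Big)\\
&\cdot\sum_{k=1}^n\Big(\frac{1}{(1-t_k^{\nu_k})\mathfrak{g}_k}+\frac{1+t_k}{1-t_k}\Big(\sum_{i=r_{k-1}+1}^{r_k-1}\frac{t_k^{p_i}}{1+t_k^{p_i}}-\Big\lfloor\frac{r_k-r_{k-1}-1}{2}\Big\rfloor\Big)\Big).
\end{align*}
This differs from the statement in two ways:
\begin{itemize}
\item in the statement, the product and the inner sum run to $r_k$, wrongly including the even tangle $[2\ell_k]$;
\item the floor term in the statement has no factor $\frac{1+t_k}{1-t_k}$, and it is off by one when $r_k-r_{k-1}$ is even.
\end{itemize}

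A concrete check is $D([1]\ast[2]\ast[2])$, the Whitehead link. The corrected formula gives $(t_1-1)(t_2-1)$. The displayed formula is not even a Laurent polynomial. With only the summation limits repaired, it still gives $2(t_1-1)(t_2-1)$.

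So no regrouping will produce (b) as printed. What your argument can prove is the corrected version above, and you should derive it explicitly rather than leaving it ``modulo tracking units''.
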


\begin{proof}
(a) We have
$$\nu_i=\begin{cases} 1,&2\nmid i \\ 2,&2\mid i \end{cases},   \qquad
z_v(p_i)=\begin{cases} (1-t_2)[h_i]_{t_2/t_1}-1, &2\nmid i \\   (1-t_1)[h_i]_{t_1/t_2}-1, &2\mid i  \end{cases}.$$
Then the formula follows from Theorem \ref{thm:Montesinos-link} (a).

(b) For $r_{k-1}<i\le r_k$, we have $\epsilon_i=(-1)^{i-1-r_{k-1}}$; for the tangle $[p_i]$,
\begin{align*}
t_{\rm ne}=t_k^{(-1)^{i-r_{k-1}}}=t_k^{-\epsilon_i}, \qquad
t_{\rm se}=\begin{cases} t_k^{-\epsilon_i}, &r_{k-1}<i<r_k \\ t_{k+1},&i=r_k  \end{cases}.
\end{align*}
Hence
\begin{align*}
z_v(p_i)&=-\frac{t_k^{-p_i\epsilon_i}+1}{t_k^{\epsilon_i}+1}, \quad  r_{k-1}<i<r_k;   \\
z_v(p_{r_k})&=t_k^{\nu_k}(t_{k+1}-1)[\ell_k]_{t_k^{\nu_k}t_{k+1}}.
\end{align*}
Then the formula follows from Theorem \ref{thm:Montesinos-link} (b).
\end{proof}

\section{Proof of Theorem \ref{thm:main}}

\subsection{Set up}

Let $L=D(T)$, with $T\in\mathcal{T}_{\rm ar}$. Recall the notations used in Section 1.

For $\mathfrak{c}_i=(x_j,x_k,x_\ell)$, by which we mean the crossing made of the arcs $x_j$, $x_k$, $x_\ell$, as shown in Figure \ref{fig:pm},
denote $j,k,\ell$ by $\overline{i},\underline{i},i'$, respectively.
\begin{figure}[H]
  \centering
  \includegraphics[width=8cm]{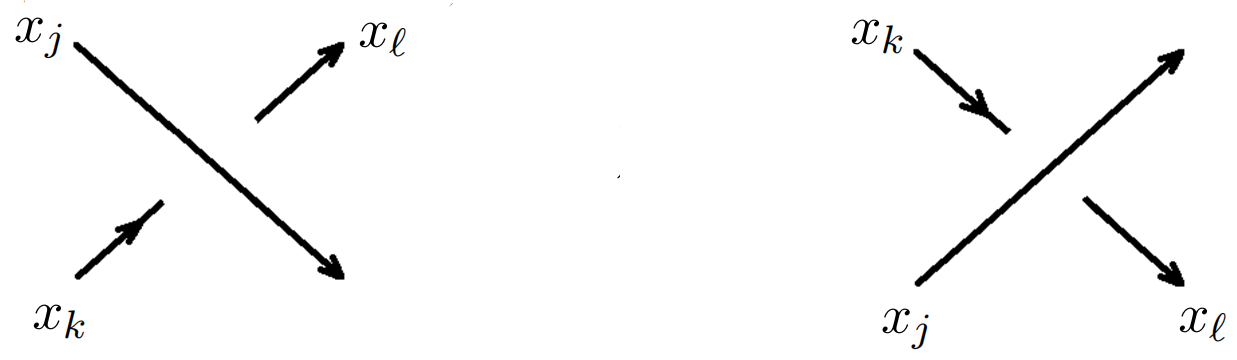}\\
  \caption{Left: a positive crossing. Right: a negative crossing.} \label{fig:pm}
\end{figure}

To simplify the notation, let $\tau_j=t_{\nu(j)}$ for each $j$.

When $\mathfrak{c}_i$ is positive, the corresponding relator is $r_i=x_jx_kx_j^{-1}x_{\ell}^{-1}$,
$$\frac{\partial r_i}{\partial x_j}=1-x_jx_kx_j^{-1}, \qquad \frac{\partial r_i}{\partial x_k}=x_j, \qquad
\frac{\partial r_i}{\partial x_\ell}=-x_jx_kx_j^{-1}x_{\ell}^{-1},$$
so
$$\Phi\Big(\frac{\partial r_i}{\partial x_j}\Big)=1-\tau_k, \qquad  \Phi\Big(\frac{\partial r_i}{\partial x_k}\Big)=\tau_j,
\qquad \Phi\Big(\frac{\partial r_i}{\partial x_\ell}\Big)=-1;$$
when $\mathfrak{c}_i$ is negative, the corresponding relator is $r_i=x_j^{-1}x_kx_jx_\ell^{-1}$,
$$\frac{\partial r_i}{\partial x_j}=-x_j^{-1}+x_j^{-1}x_k, \qquad \frac{\partial r_i}{\partial x_k}=x_j^{-1}, \qquad
\frac{\partial r_i}{\partial x_\ell}=-x_j^{-1}x_kx_jx_\ell^{-1},$$
so
$$\Phi\Big(\frac{\partial r_i}{\partial x_j}\Big)=\tau_j^{-1}(\tau_k-1), \qquad  \Phi\Big(\frac{\partial r_i}{\partial x_k}\Big)=\tau_j^{-1},
\qquad \Phi\Big(\frac{\partial r_i}{\partial x_\ell}\Big)=-1.$$
In either case (remembering that $\tau_\ell=\tau_k$),
\begin{align*}
\Phi\Big(\frac{\partial r_i}{\partial x_j},\frac{\partial r_i}{\partial x_k},\frac{\partial r_i}{\partial x_\ell}\Big)
\left(\begin{array}{ccc} 1-\tau_j & \ & \  \\ \ & 1-\tau_k & \ \\ \ & \ & 1-\tau_\ell \end{array}\right)
=(1-\tau_\ell)(1-\tau_j^{\epsilon},\tau_j^{\epsilon},-1),
\end{align*}
where $\epsilon=1$ (resp. $\epsilon=-1$) if $\mathfrak{c}_i$ is positive (resp. negative).

Thus, similarly as in the beginning of \cite[Section 3]{Ch21}, $MD=D'Q_L$, where
\begin{itemize}
  \item $D$ is the $n\times n$ diagonal matrix whose $j$-th diagonal entry is $1-\tau_{j}$,
  \item $D'$ is the $n\times n$ diagonal matrix whose $i$-th entry is $1-\tau_{i'}$,
  \item $Q_L=(q_{ij})_{n\times n}$, with $q_{i,j}=0$ for $j\notin\{\overline{i},\underline{i},i'\}$, and
        \begin{align*}
        q_{i,\overline{i}}=1-\tau_{\overline{i}}^{\epsilon}, \qquad  q_{i,\underline{i}}=\tau_{\overline{i}}^{\epsilon},
        \qquad  q_{i,i'}=-1,
        \end{align*}
        where $\epsilon=1$ (resp. $\epsilon=-1$) if $\mathfrak{c}_i$ is positive (resp. negative).
\end{itemize}

\begin{rmk}
\rm A crossing $\mathfrak{c}_i$ may give rise to several different relators which are conjugate to each other, and $\partial r_i/\partial x_j$, $\partial r_i/\partial x_k$, $\partial r_i/\partial x_\ell$ depend on the choice of $r_i$. However, up to a factor of the form $\pm\tau_{\overline{i}}$ or $\pm\tau_{\overline{i}}^{-1}$, the $i$-th row of $Q_L$ only depends on $\mathfrak{c}_i$.
This is why in Section 2, we are content with defining $z_v(T),z_h(T)$ up to the same unit.
\end{rmk}

For any $i,j$, let $M_{\neg i}^{\neg j}$ denote the matrix obtained from $M$ by deleting the $i$-th row and the $j$-th column.
Then $M_{\neg i}^{\neg j}D_{\neg j}^{\neg j}=(D')^{\neg i}_{\neg i}(Q_L)_{\neg i}^{\neg j}$, so
\begin{align}
\frac{\det\big(M_{\neg i}^{\neg j}\big)}{1-\tau_j}\doteq\frac{\det\big((Q_L)_{\neg i}^{\neg j}\big)}{1-\tau_{i'}}.  \label{eq:AP}
\end{align}

Suppose $T^{\rm ne}$ or $(T^{\rm ne})^{-1}$ is one of the two lower arcs constituting the crossing $\mathfrak{c}_{i_0}$, i.e.,
$T^{\rm ne}\in\{x_{\underline{i_0}}^{\pm1},x_{i'_0}^{\pm1}\}$, then $\tau_{i'_0}\in\{\Phi(T^{\rm ne})^{\pm1}\}$.

Let $\Lambda$ denote the free abelian group generated by $t_1,\ldots,t_m$, whose elements are denoted multiplicatively.
Following \cite{Li97}, construct a CW complex $P$ by taking one 0-cell $O$, oriented 1-cells $A_1,\ldots,A_n$ with $A_j$ identified with $x_j$, and oriented 2-cells $B_1,\ldots,B_n$ such that $\partial B_i$ is attached according to $r_i$. Let $\widetilde{P}$ denote the cover of $P$ corresponding to the kernel of the map $\pi(L)\to\Lambda$ determined by $x_j\mapsto t_{\nu(j)}=\tau_j$. Choose a lift $\widetilde{O}$ for $O$, let $\widetilde{A}_j$ be the lift of $A_j$ that starts at $\widetilde{O}$, and let $\widetilde{B}_i$ be the lift of $B_i$ such that $\partial\widetilde{B}_i$ is the lift of $r_i$ starting at $\widetilde{V}$.
Set up the chain complex of $\mathbb{Z}[\Lambda]$-modules
$$C_2(\widetilde{P})\stackrel{d_2}\longrightarrow C_1(\widetilde{P})\stackrel{d_1}\longrightarrow C_0(\widetilde{P}),$$
where $C_i(\widetilde{P})$ is freely generated by the specified $i$-cells, and
$$d_2(\widetilde{B}_i)={\sum}_j\Phi\Big(\frac{\partial r_i}{\partial x_j}\Big)\cdot \widetilde{A}_j.$$
Then $M$ is the presentation matrix for $\mathcal{N}:=C_1(\widetilde{P})/{\rm Im}(d_2)$, with respect to the generators $\check{A}_1,\ldots,\check{A}_n$, where $\check{A}_j$ is represented by $\widetilde{A}_j$.

Extending the coefficients from $\mathbb{Z}[\Lambda]$ to $\mathbb{Q}(\Lambda)$, we put
\begin{align}
\xi_{x_j}=(1-\tau_j)^{-1}\check{A}_j.   \label{eq:xi}
\end{align}
Then in $\mathcal{N}\otimes\mathbb{Q}(\Lambda)$, $M(\check{A}_1,\ldots,\check{A}_n)=0$ is equivalent to $Q_L(\xi_{x_1},\ldots,\xi_{x_n})=0$.

\begin{rmk}\label{rmk:independence}
\rm We can extend the definition of $\xi_x$ to all directed arcs $x$ by putting $\xi_x=(1-\Phi(x))^{-1}\widetilde{A}_x$, where $\widetilde{A}_x$ is the lifting of the oriented 1-cell corresponding to $x$. Note that $A_j^{-1}$ lifts to $-\tau_j^{-1}\widetilde{A}_j$, so
$$\xi_{x_j^{-1}}=(1-\tau_j^{-1})^{-1}(-\tau_j^{-1}\check{A}_j)=\xi_{x_j}.$$
This enables us to regard $\xi_x$ as an element associated to an undirected arc.
\end{rmk}

The goal is to transform $Q_L$ into a highly simplified matrix.
To this end, we generalize the formalism to tangles.

\subsection{Working with tangles}

Given oriented $(T,\alpha)\in\mathcal{T}_{\rm ar}^c$, numerate its crossings as $\mathfrak{c}_1,\ldots,\mathfrak{c}_n$, and numerate the directed arcs as $x_1,\ldots,x_{n+2}$, where the direction of $x_j$ is given by the orientation. Let $\tau_j=\alpha(x_j)$, and let $\Lambda$ denote the free abelian group generated by the $\tau_j$'s, whose elements are denoted multiplicatively.
Construct a matrix $Q_T$ by a manner similar as $Q_L$, by putting $Q_T=(q_{ij})_{n\times(n+2)}$, with $q_{i,j}=0$ for $j\notin\{\overline{i},\underline{i},i'\}$, and
\begin{align}
q_{i,\overline{i}}=1-\tau_{\overline{i}}^{\epsilon}, \qquad  q_{i,\underline{i}}=\tau_{\overline{i}}^{\epsilon},
\qquad  q_{i,i'}=-1,  \label{eq:QT}
\end{align}
where $\epsilon=1$ (resp. $\epsilon=-1$) if $\mathfrak{c}_i$ is positive (resp. negative).

Regarding $T$ as a $1$-manifold embedded in $\mathbb{R}^3$, take a $3$-ball $\mathfrak{B}$ containing $T$ with 
$T\cap\partial \mathfrak{B}=\partial T$.
Then $\pi_1(\mathfrak{B}\setminus T)\cong\langle x_1,\ldots,x_{n+2}\mid r_1,\ldots,r_n\rangle$, where $r_i$ is contributed by $\mathfrak{c}_i$.

Similarly as above, construct CW complexes $P_T$, $\widetilde{P}_T$ and the chain complex of $\mathbb{Z}[\Lambda]$-modules
$C_2(\widetilde{P}_T)\stackrel{d_2}\longrightarrow C_1(\widetilde{P}_T)\stackrel{d_1}\longrightarrow C_0(\widetilde{P}_T)$.
Note that if $w\in F_n$ represents $1$ in $\pi_1(E_T)$, then in $\mathcal{N}_T:=C_1(\widetilde{P}_T)/{\rm Im}(d_2)$,
\begin{align}
{\sum}_j\Phi\Big(\frac{\partial w}{\partial x_j}\Big)\cdot\check{A}_j=0.   \label{eq:trivial}
\end{align}
We can introduce $\xi_x$ for each directed arc $x$ similarly as (\ref{eq:xi}), and also show $\xi_x=\xi_{x^{-1}}$.
Furthermore, $Q_T(\xi_{x_1},\ldots,\xi_{x_{n+2}})=0$ in $\mathcal{N}_T\otimes\mathbb{Q}(\Lambda)$.

For a subtangle $S$ of $T$ and $\o\in\{{\rm nw},{\rm ne},{\rm sw},{\rm se}\}$, let $\xi^{\o}=\xi^{\o}_S=\xi_{S^{\o}}$.
Consider the following assertions:
\begin{enumerate}
  \item[\rm(v)] For each arc $x$, there exists $b^x\in\mathbb{Q}(\Lambda)$ with $\xi_x=(1-b^x)\xi^{\rm nw}+b^x\xi^{\rm ne}$.
  \item[\rm(h)] For each arc $x$, there exists $c^x\in\mathbb{Q}(\Lambda)$ with $\xi_x=(1-c^x)\xi^{\rm nw}+c^x\xi^{\rm sw}$.
\end{enumerate}

If $S=[1]$, then $\xi^{\rm se}=\xi^{\rm nw}$; in the sprit of Remark \ref{rmk:independence},
reversing the directions of the arcs if necessary, we may assume that the unique crossing takes the form in the left part of Figure \ref{fig:pm}, then from (\ref{eq:QT}) we can see
\begin{align}
\xi^{\rm ne}=(1-\alpha(S^{\rm se}))\xi^{\rm nw}+\alpha(S^{\rm se})\xi^{\rm sw};  \label{eq:[1]}
\end{align}
if $S=[-1]$, then $\xi^{\rm sw}=\xi^{\rm ne}$, and from (\ref{eq:QT}) we can see
\begin{align}
\xi^{\rm se}=(1-\alpha(S^{\rm sw}))\xi^{\rm nw}+\alpha(S^{\rm sw})\xi^{\rm ne}.   \label{eq:[-1]}
\end{align}
Hence the assertions (v), (h) are true when $S=[\pm1]$.

Suppose (v) holds, then $\xi^{\rm sw}=(1-b^{\rm sw})\xi^{\rm nw}+b^{\rm sw}\xi^{\rm ne}$;
when $b^{\rm sw}\ne 0$,
$$\xi^{\rm ne}=\Big(1-\frac{1}{b^{\rm sw}}\Big)\xi^{\rm nw}+\frac{1}{b^{\rm sw}}\xi^{\rm sw},$$
so we may rewrite each $\xi_x$ as
$\xi_x=(1-c^x)\xi^{\rm nw}+c^x\xi^{\rm sw}$, with
\begin{align*}
c^x=b^x/b^{\rm sw}.
\end{align*}
In particular,
\begin{align}
c^{\rm ne}=1/b^{\rm sw}, \qquad  c^{\rm se}=b^{\rm se}/b^{\rm sw}.   \label{eq:c}
\end{align}
Thus, (v) implies (h) if $b^{\rm sw}\ne 0$.
Similarly, (h) implies (v) if $c^{\rm ne}\ne 0$.

Suppose $S=S_1\ast S_2$ and the assertions (v), (h) hold for $S_1,S_2$. Since $\xi_{S_2}^{\rm nw}=\xi_{S_1}^{\rm sw}$ and
$\xi_{S_2}^{\rm ne}=\xi_{S_1}^{\rm se}$, for each arc $x$ of $S_2$, we may rewrite each
$\xi_x$ as
\begin{align*}
\xi_x&=(1-b_2^x)\big((1-b_1^{\rm sw})\xi_{S_1}^{\rm nw}+b_1^{\rm sw}\xi_{S_1}^{\rm ne}\big)
+b_2^x\big((1-b_1^{\rm se})\xi_{S_1}^{\rm nw}+b_1^{\rm se}\xi_{S_1}^{\rm ne}\big)  \\
&=(1-b^x)\xi_{S_1}^{\rm nw}+b^x\xi_{S_1}^{\rm ne}=(1-b^x)\xi_{S}^{\rm nw}+b^x\xi_{S}^{\rm ne},
\end{align*}
with $b^x=(1-b_2^x)b_1^{\rm sw}+b_2^xb_1^{\rm se}$, where the meanings of $b_1^x,b_2^x$ are self-explanatory.
Hence (v) holds for $S$, so does (h) if $b^{\rm sw}\ne 0$.

The situation is similar when $S=S_1+S_2$.

Thus, recursively we can show (v), (h) when $T$ is generic; by ``generic" we mean that $b^{\rm sw}c^{\rm ne}\ne 0$ in each step.

\bigskip

Set $b^{\rm nw}=b_T^{\rm nw}=b^{T^{\rm nw}}$, $c^{\rm nw}=c_T^{\rm nw}=c^{T^{\rm nw}}$, etc., then
\begin{alignat}{2}
\left(\begin{array}{cc} \xi^{\rm sw} \\ \xi^{\rm se} \end{array}\right)&=F_v^T
\left(\begin{array}{cc} \xi^{\rm nw} \\ \xi^{\rm ne} \end{array}\right), \qquad
&&\text{with} \quad  F_v^T=\left(\begin{array}{cc} 1-b^{\rm sw} & b^{\rm sw} \\ 1-b^{\rm se} & b^{\rm se} \end{array}\right),  \label{eq:Fv} \\
\left(\begin{array}{cc} \xi^{\rm ne} \\ \xi^{\rm se} \end{array}\right)&=F_h^T
\left(\begin{array}{cc} \xi^{\rm nw} \\ \xi^{\rm sw} \end{array}\right), \qquad
&&\text{with} \quad  F_h^T=\left(\begin{array}{cc} 1-c^{\rm ne} & c^{\rm ne} \\ 1-c^{\rm se} & c^{\rm se} \end{array}\right).  \label{eq:Fh}
\end{alignat}
In particular, from (\ref{eq:[1]}), (\ref{eq:[-1]}) we see
\begin{align}
F_h^{[1]}&=\left(\begin{array}{cc} 1-\alpha([1]^{\rm se}) & \alpha([1]^{\rm se}) \\ 1 & 0 \end{array}\right),   \label{eq:Fh(1)}   \\
F_v^{[-1]}&=\left(\begin{array}{cc} 0 & 1 \\ 1-\alpha([-1]^{\rm sw}) & \alpha([-1]^{\rm sw}) \end{array}\right).   \label{eq:Fv(-1)}
\end{align}

We may rephrase the assertions (v), (h) as: in generic case, $Q_T$ can be transformed into either of
\begin{align*}
N_v^T:&=\left(\begin{array}{cccccc} 1-b^{x_1} & b^{x_1} & -1 & \ & \ & \ \\ \vdots & \vdots & \ & \ddots & \ & \ \\
1-b^{\rm sw} & b^{\rm sw} & \ & \ & -1 & \  \\  1-b^{\rm se} & b^{\rm se} & \ & \ & \ & -1 \end{array}\right),   \\
N_h^T:&=\left(\begin{array}{cccccc} 1-c^{x_1} & c^{x_1} & -1 & \ & \ & \ \\ \vdots & \vdots & \ & \ddots & \ & \ \\
1-c^{\rm ne} & c^{\rm ne} & \ & \ & -1 & \  \\  1-c^{\rm se} & c^{\rm se} & \ & \ & \ & -1 \end{array}\right),
\end{align*}
i.e., there exist invertible matrices $U_v^T$, $U_h^T$ over $\mathbb{Q}(\Lambda)$ and permutation matrices $P_v^T$, $P_h^T$ such that $U_v^TQ_TP_v^T=N_v^T$, and $U_h^TQ_TP_h^T=N_h^T$.

As is easy to see, $U_v^T$, $U_h^T$ are unique.
Let
$$\tilde{z}_v(T)=1/\det(U_v^T), \qquad  \tilde{z}_h(T)=1/\det(U_h^T).$$
In particular, by (\ref{eq:Fh(1)}), (\ref{eq:Fv(-1)}),
\begin{align}
\tilde{z}_v([1])=-\alpha([1]^{\rm se}),  \qquad  \tilde{z}_v([-1])=-1.   \label{eq:z'(pm1)}
\end{align}

Consider the following table:
\begin{center}
\begin{tabular}{cccc}
  \hline
  $\xi^{\rm nw}$ & $\xi^{\rm ne}$ & $\xi^{\rm sw}$ & $\xi^{\rm se}$ \\
  \hline
  $1-b^{\rm sw}$ & $b^{\rm sw}$ & $-1$ & 0 \\
  $1-b^{\rm se}$ & $b^{\rm se}$ & 0 & $-1$ \\
  \hline
  $1-1/b^{\rm sw}$ & $-1$ & $1/b^{\rm sw}$ & 0  \\
  $1-b^{\rm se}/b^{\rm sw}$ & 0 & $b^{\rm se}/b^{\rm sw}$ & $-1$  \\
  \hline
\end{tabular}
\end{center}
The first and second rows correspond to $\xi^{\rm sw}=(1-b^{\rm sw})\xi^{\rm nw}+b^{\rm sw}\xi^{\rm ne}$ and
$\xi^{\rm se}=(1-b^{\rm se})\xi^{\rm nw}+b^{\rm se}\xi^{\rm ne}$ respectively; irrelevant columns have been omitted.
Then we can express $\xi^{\rm ne},\xi^{\rm se}$ as linear combinations of $\xi^{\rm nw},\xi^{\rm sw}$, corresponding to the third and fourth rows.
Note that the third row results from multiplying the first row by $-1/b^{\rm sw}$, and the fourth row is obtained by subtracting
$b^{\rm se}/b^{\rm sw}$ times the first row from the second row. Consequently,
\begin{align}
\tilde{z}_h(T)=-b^{\rm sw}\tilde{z}_v(T).   \label{eq:z-vs-b}
\end{align}

Moreover, from the process of proving (v), (h) we can see
\begin{align}
\tilde{z}_v(T_1\ast T_2)&=\tilde{z}_v(T_1)\tilde{z}_v(T_2),   \label{eq:zv-zv}  \\
\tilde{z}_h(T_1+T_2)&=\tilde{z}_h(T_1)\tilde{z}_h(T_2),    \label{eq:zh-zh}
\end{align}
since substitutions correspond to multiplying by matrices of determinant $1$.


Let $t_{\rm ne}$ denote the color of $T^{\rm ne}$, etc.
Observe that $T^{\rm nw}T^{\rm ne}T^{\rm se}T^{\rm sw}=1$ is a relation holding in $\pi_1(\mathfrak{B}\setminus T)$.
Hence $t_{\rm nw}t_{\rm ne}t_{\rm se}t_{\rm sw}=1$, and by (\ref{eq:trivial}),
\begin{align*}
0&=(1-t_{\rm nw})\xi^{\rm nw}+t_{\rm nw}(1-t_{\rm ne})\xi^{\rm ne}+t_{\rm nw}t_{\rm ne}(1-t_{\rm se})\xi^{\rm se}
+t_{\rm sw}^{-1}(1-t_{\rm sw})\xi^{\rm sw}  \\
&=(1-t_{\rm nw})\xi^{\rm nw}+t_{\rm nw}(1-t_{\rm ne})\xi^{\rm ne}
+t_{\rm sw}^{-1}t_{\rm se}^{-1}(1-t_{\rm se})(((1-b^{\rm se})\xi^{\rm nw}+b^{\rm se}\xi^{\rm ne})  \\
&\ \ \ \ +(t_{\rm sw}^{-1}-1)((1-b^{\rm sw})\xi^{\rm nw}+b^{\rm sw}\xi^{\rm ne})  \\
&=\big(1-t_{\rm nw}+t_{\rm sw}^{-1}(t_{\rm se}^{-1}-1)(1-b^{\rm se})+(t_{\rm sw}^{-1}-1)(1-b^{\rm sw})\big)\xi^{\rm nw} \\
&\ \ \ \ +\big((1-t_{\rm ne})t_{\rm nw}+t_{\rm sw}^{-1}(t_{\rm se}^{-1}-1)b^{\rm se}+(t_{\rm sw}^{-1}-1)b^{\rm sw}\big)\xi^{\rm ne}.
\end{align*}
Clearly, $\dim\{\mathbf{v}\in\mathbb{Q}(\Lambda)^{n+2}\colon Q_T\mathbf{v}=0\}\ge 2$.
Thus,
\begin{align*}
1-t_{\rm nw}+t_{\rm sw}^{-1}(t_{\rm se}^{-1}-1)(1-b^{\rm se})+(t_{\rm sw}^{-1}-1)(1-b^{\rm sw})=0,   \\
(1-t_{\rm ne})t_{\rm nw}+t_{\rm sw}^{-1}(t_{\rm se}^{-1}-1)b^{\rm se}+(t_{\rm sw}^{-1}-1)b^{\rm sw}=0.
\end{align*}
Both equations are equivalent to
\begin{align}
(1-t_{\rm se}^{-1})b^{\rm se}+(t_{\rm sw}-1)b^{\rm sw}=t_{\rm sw}t_{\rm nw}(1-t_{\rm ne})=t_{\rm se}^{-1}(t_{\rm ne}^{-1}-1).   \label{eq:constraint}
\end{align}
By (\ref{eq:c}), $b^{\rm sw}=1/c^{\rm ne}$, $b^{\rm se}=c^{\rm se}/c^{\rm ne}$, hence
\begin{align}
(t^{-1}_{\rm ne}-1)c^{\rm ne}+(1-t_{\rm se})c^{\rm se}=t_{\rm se}(t_{\rm sw}-1).   \label{eq:constraint'}
\end{align}

\begin{lem}\label{lem:composition}
Let $t_{\rm ne}$, $t_{\rm se}$, $t_{\rm sw}$ respectively denote the colors of $T_1^{\rm ne}$, $T_1^{\rm se}$, $T_1^{\rm sw}$.
Then in the notation of {\rm(\ref{eq:composition-b})}, {\rm(\ref{eq:composition-c})},
\begin{align}
\frac{\tilde{z}_h(T_1\ast T_2)}{\tilde{z}_v(T_1\ast T_2)}
=\frac{\tilde{z}_h(T_1)}{\tilde{z}_v(T_1)}\ast_{\mathsf{t}}\frac{\tilde{z}_h(T_2)}{\tilde{z}_v(T_2)},  \label{eq:fraction'-v}   \\
\frac{\tilde{z}_v(T_1+T_2)}{\tilde{z}_h(T_1+T_2)}
=\frac{\tilde{z}_v(T_1)}{\tilde{z}_h(T_1)}\circ_{\mathsf{t}}\frac{\tilde{z}_v(T_2)}{\tilde{z}_h(T_2)}.  \label{eq:fraction'-h}
\end{align}
\end{lem}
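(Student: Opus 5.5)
By (\ref{eq:z-vs-b}), $\tilde{z}_h(T)/\tilde{z}_v(T)=-b_T^{\rm sw}$ for every generic $T$, so (\ref{eq:fraction'-v}) reduces to an identity among the entries $b^{\rm sw}$ of $T=T_1\ast T_2$, $T_1$, $T_2$. The composition rule derived in the proof of (v) for $S_1\ast S_2$ gives, with $x=T^{\rm sw}=T_2^{\rm sw}$,
$$b_T^{\rm sw}=(1-b_2^{\rm sw})b_1^{\rm sw}+b_2^{\rm sw}b_1^{\rm se}.$$
The plan is to remove $b_1^{\rm se}$ using the constraint (\ref{eq:constraint}) applied to $T_1$. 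With $t_{\rm ne},t_{\rm se},t_{\rm sw}$ the colors of $T_1^{\rm ne},T_1^{\rm se},T_1^{\rm sw}$, this reads
$$b_1^{\rm se}=\frac{t_{\rm se}^{-1}(t_{\rm ne}^{-1}-1)-(t_{\rm sw}-1)b_1^{\rm sw}}{1-t_{\rm se}^{-1}}.$$
This is legitimate when $t_{\rm se}\ne1$, which holds as a formal variable.

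Substituting and writing $B_i=-b_i^{\rm sw}$ gives
$$-B=-B_1+B_2B_1+B_2\,\frac{t_{\rm se}^{-1}(1-t_{\rm ne}^{-1})-(t_{\rm sw}-1)B_1}{1-t_{\rm se}^{-1}}.$$
Hence
$$B=B_1-B_1B_2\Bigl(1-\frac{t_{\rm sw}-1}{1-t_{\rm se}^{-1}}\Bigr)+\frac{1-t_{\rm ne}^{-1}}{t_{\rm se}-1}B_2.$$
The coefficient simplifies: $1-\frac{t_{\rm sw}-1}{1-t_{\rm se}^{-1}}=\frac{2-t_{\rm se}^{-1}-t_{\rm sw}}{1-t_{\rm se}^{-1}}$. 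The target formula (\ref{eq:composition-b}) has coefficients $\frac{1-t_{\rm ne}^{-1}}{1-t_{\rm se}}$ on $B_2$ and $\frac{t_{\rm sw}-t_{\rm se}^{-1}}{1-t_{\rm se}^{-1}}$ on $B_1B_2$. These do not match on the nose: there is a sign discrepancy on the $B_2$ term and a different numerator on the $B_1B_2$ term.

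I expect this bookkeeping to be the main obstacle. The discrepancy must be resolved by careful use of the conventions. First, the colors in (\ref{eq:constraint}) are of outward-directed arcs, with $t_{\rm nw}t_{\rm ne}t_{\rm se}t_{\rm sw}=1$. Second, the shared arcs $T_2^{\rm nw}=(T_1^{\rm sw})^{-1}$ and $T_2^{\rm ne}=(T_1^{\rm se})^{-1}$ reverse direction, although by Remark~\ref{rmk:independence} the $\xi$'s do not. Third, and most likely the source of the mismatch, $\tilde z_h/\tilde z_v$ is only defined up to sign (through $\det U$ and the permutation matrices $P_v,P_h$), so the sign convention in (\ref{eq:z-vs-b}) must be fixed consistently. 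I would first pin this down on $T_1=T_2=[-1]$ and on $[1]\ast[1]$, using (\ref{eq:Fh(1)}) and (\ref{eq:Fv(-1)}) together with (R1), and adjust the sign or normalization accordingly. I would then rederive the coefficients using the second form of (\ref{eq:constraint}), namely $t_{\rm sw}t_{\rm nw}(1-t_{\rm ne})$, checking each step against the composition rule for $b^x$. After these corrections the identity should reduce to the rational-function equality defining $\ast_{\mathsf t}$.

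For (\ref{eq:fraction'-h}) I would not repeat the computation but reduce to the first identity by the reflection $\sigma$. Reflecting along the NW–SE diagonal exchanges $\ast$ and $+$, with $\sigma(T_1+T_2)=\sigma(T_1)\ast\sigma(T_2)$. It inverts colors and sends the ne, se, sw colors of $T_1$ to the inverses of the sw, se, ne colors. On the linear-algebra side it swaps (v) with (h), so $\tilde z_v(\sigma T)=\tilde z_h(T)$ and $\tilde z_h(\sigma T)=\tilde z_v(T)$ up to units; this is the analogue of Lemma~\ref{lem:reflection}. Substituting $t_{\rm ne}\mapsto t_{\rm sw}^{-1}$, $t_{\rm se}\mapsto t_{\rm se}^{-1}$, $t_{\rm sw}\mapsto t_{\rm ne}^{-1}$ into $\ast_{\mathsf t}$ turns it exactly into $\circ_{\mathsf t}$. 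Alternatively, the same computation can be done directly with $c^{\rm ne}$, the composition rule for $c^x$, and (\ref{eq:constraint'}).
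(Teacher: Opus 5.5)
Your route is the paper's own. You identify $\tilde z_h/\tilde z_v$ with $-b^{\rm sw}$ via (\ref{eq:z-vs-b}), read off $b^{\rm sw}_{T_1\ast T_2}=(1-b_2^{\rm sw})b_1^{\rm sw}+b_2^{\rm sw}b_1^{\rm se}$ from $F_v^{T_1\ast T_2}=F_v^{T_2}F_v^{T_1}$, and eliminate $b_1^{\rm se}$ with (\ref{eq:constraint}).

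\textbf{The gap.} You stop at a claimed mismatch and defer to unspecified ``corrections'', so the lemma is not proved. The mismatch is not real; it comes from two sign slips.
\begin{itemize}
\item With $b_i^{\rm sw}=-B_i$ one has $(1-b_2^{\rm sw})b_1^{\rm sw}=-(1+B_2)B_1=-B_1-B_1B_2$, not $-B_1+B_1B_2$.
\item Since $\frac{t_{\rm se}^{-1}(1-t_{\rm ne}^{-1})}{1-t_{\rm se}^{-1}}=\frac{1-t_{\rm ne}^{-1}}{t_{\rm se}-1}$, moving this $B_2$-term to the other side gives $\frac{1-t_{\rm ne}^{-1}}{1-t_{\rm se}}B_2$, not $\frac{1-t_{\rm ne}^{-1}}{t_{\rm se}-1}B_2$.
\end{itemize}
Done correctly,
\[
B=B_1+\frac{1-t_{\rm ne}^{-1}}{1-t_{\rm se}}\,B_2+\Bigl(1+\frac{t_{\rm sw}-1}{1-t_{\rm se}^{-1}}\Bigr)B_1B_2,
\qquad 1+\frac{t_{\rm sw}-1}{1-t_{\rm se}^{-1}}=\frac{t_{\rm sw}-t_{\rm se}^{-1}}{1-t_{\rm se}^{-1}},
\]
which is exactly $B_1\ast_{\mathsf t}B_2$. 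This is precisely the paper's computation.

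\textbf{Your proposed remedies would mislead you.} There is no sign to renormalize. $b^{\rm sw}$ is canonically determined by the linear relations among the $\xi$'s, and (\ref{eq:z-vs-b}) is an exact identity. In fact, using $+b^{\rm sw}$ in place of $-b^{\rm sw}$ would make the $B_1B_2$ coefficient $\frac{t_{\rm se}^{-1}-t_{\rm sw}}{1-t_{\rm se}^{-1}}$, and the identity would then be false.

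\textbf{On (\ref{eq:fraction'-h}).} The paper simply repeats the computation with $c^{\rm ne}$, $F_h^{T_1+T_2}=F_h^{T_2}F_h^{T_1}$ and (\ref{eq:constraint'}). Your reflection shortcut is a legitimate alternative, but you cannot lean on Lemma \ref{lem:reflection}. That lemma concerns the recursively defined $z$, and its agreement with $\tilde z$ is exactly what the present lemma is later used to establish. Also, a statement ``up to units'' does not yield an exact equality of ratios.

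What you actually need is $b^{\rm sw}_{\sigma(T)}=c^{\rm ne}_T$. This holds for the following reason:
\begin{itemize}
\item $\sigma$ is a mirror reflection with inverted colors. With the image orientation, each crossing sign $\epsilon$ flips while the over-arc color is inverted.
\item Hence $\tau^{\epsilon}$, and so every row of (\ref{eq:QT}), is unchanged.
\item Meanwhile the ne and sw ends are exchanged.
\end{itemize}
Then $\tilde z_v(T)/\tilde z_h(T)=-c^{\rm ne}_T=\tilde z_h(\sigma(T))/\tilde z_v(\sigma(T))$. Your color substitution then turns the first identity, applied to $\sigma(T_1)\ast\sigma(T_2)=\sigma(T_1+T_2)$, into the second.
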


\begin{proof}
Remember that $-\tilde{z}_h(T)/\tilde{z}_v(T)=b_T^{\rm sw}$, the $(1,2)$-entry of $F_v^T$.

It follows from the self-evident identity $F_v^{T_1\ast T_2}=F_v^{T_2}F_v^{T_1}$ that
\begin{align*}
b^{\rm sw}_{T_1\ast T_2}&\ =b_{T_1}^{\rm sw}-b_{T_1}^{\rm sw}b_{T_2}^{\rm sw}+b_{T_1}^{\rm se}b_{T_2}^{\rm sw}    \\
&\stackrel{(\ref{eq:constraint})}=b_{T_1}^{\rm sw}-b_{T_1}^{\rm sw}b_{T_2}^{\rm sw}
+\frac{t_{\rm se}^{-1}(t_{\rm ne}^{-1}-1)+(1-t_{\rm sw})b_{T_1}^{\rm sw}}{1-t_{\rm se}^{-1}}b_{T_2}^{\rm sw}   \\
&\ =b_{T_1}^{\rm sw}+\frac{1-t_{\rm ne}^{-1}}{1-t_{\rm se}}b_{T_2}^{\rm sw}
+\frac{t_{\rm se}^{-1}-t_{\rm sw}}{1-t_{\rm se}^{-1}}b_{T_1}^{\rm sw}b_{T_2}^{\rm sw},
\end{align*}
which yields (\ref{eq:fraction'-v}).

Similarly, it follows from the identity $F_h^{T_1+T_2}=F_h^{T_2}F_h^{T_1}$ that
\begin{align*}
c^{\rm ne}_{T_1+T_2}&\ =c_{T_1}^{\rm ne}-c_{T_1}^{\rm ne}c_{T_2}^{\rm ne}+c_{T_1}^{\rm se}c_{T_2}^{\rm ne}   \\
&\stackrel{(\ref{eq:constraint'})}=c_{T_1}^{\rm ne}-c_{T_1}^{\rm ne}c_{T_2}^{\rm ne}
+\frac{t_{\rm se}(t_{\rm sw}-1)+(1-t_{\rm ne}^{-1})c_{T_1}^{\rm ne}}{1-t_{\rm se}}c_2^{\rm ne}   \\
&\ =c_{T_1}^{\rm ne}+\frac{1-t_{\rm sw}}{1-t_{\rm se}^{-1}}c_{T_2}^{\rm ne}
+\frac{t_{\rm se}-t_{\rm ne}^{-1}}{1-t_{\rm se}}c_{T_1}^{\rm ne}c_{T_2}^{\rm ne},
\end{align*}
which yields (\ref{eq:fraction'-h}).
\end{proof}

\begin{lem}\label{lem:integer}
For each $k\in\mathbb{Z}$,
\begin{enumerate}
  \item[\rm(i)] let $t_{\o}$ denote the color of $[k]^{\o}$ for $\o\in\{{\rm ne},{\rm se},{\rm sw}\}$, then
        \begin{align*}
        \tilde{z}_v([k])=\begin{cases} t_{\rm ne}(t_{\rm se}-1)[h]_{t_{\rm ne}t_{\rm se}},&k=2h  \\
        (1-t_{\rm se})[h]_{t_{\rm ne}t_{\rm se}}-1,&k=2h-1 \end{cases};
        \end{align*}
  \item[\rm(ii)] let $r_{\o}$ denote the color of $[1/k]^{\o}$ for $\o\in\{{\rm ne},{\rm se},{\rm sw}\}$, then
        \begin{align*}
        \tilde{z}_h([1/k])=\begin{cases} r_{\rm sw}^{-1}(r_{\rm se}^{-1}-1)[h]_{r_{\rm sw}^{-1}r_{\rm se}^{-1}},&k=2h  \\
        (1-r_{\rm se}^{-1})[h]_{r_{\rm sw}^{-1}r_{\rm se}^{-1}}-1,&k=2h-1 \end{cases}.
        \end{align*}
\end{enumerate}
\end{lem}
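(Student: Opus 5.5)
We prove (i) by induction on $|k|$, using the multiplicativity (\ref{eq:zv-zv}) in its horizontal counterpart. More precisely, we use the fact that $[k]$ is a horizontal composite, so that $F_h^{[k]}$ is a product of the matrices $F_h^{[\pm1]}$ and $\tilde{z}_h([k])=\prod\tilde{z}_h([\pm1])$ by (\ref{eq:zh-zh}). We then recover $\tilde{z}_v([k])$ from $\tilde{z}_h([k])$ via (\ref{eq:z-vs-b}), namely $\tilde{z}_v=-c^{\rm ne}\tilde{z}_h$, which is the same as $\tilde{z}_h=-b^{\rm sw}\tilde{z}_v$ with $b^{\rm sw}=1/c^{\rm ne}$.

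The steps are as follows. First we record the base data. From (\ref{eq:Fh(1)}) we have $F_h^{[1]}$ and $\tilde{z}_h([1])=1$, since the substitution there involves only the row of $Q_{[1]}$ with the $-1$ in the $\xi^{\rm ne}$ slot. For $[-1]$ we invert (\ref{eq:Fv(-1)}) to get $F_h^{[-1]}$ and $\tilde{z}_h([-1])$, which should be a unit such as $\alpha^{-1}$. Next, $[k]=[k-1]+[\pm1]$ with the coloring propagating. In $[k]$ the two strands alternate, so the colors at the ends of the successive unit pieces alternate between $t_{\rm ne}$-type and $t_{\rm se}$-type values, up to inversion according to orientation. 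Hence $F_h^{[k]}$ is a product of $2\times2$ matrices of the form $\begin{pmatrix}1-a&a\\1&0\end{pmatrix}$ with $a$ alternating between two colors. Multiplying these in pairs gives a recursion for $c^{\rm ne}_{[k]}$ with step $k\mapsto k+2$. Its solution is a geometric sum in $t_{\rm ne}t_{\rm se}$, which accounts for $[h]_{t_{\rm ne}t_{\rm se}}$.

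A cleaner route is to avoid matrices and use Lemma \ref{lem:composition} directly. Since $\tilde z_v([k])/\tilde z_h([k]) = (\tilde z_v([k-1])/\tilde z_h([k-1]))\circ_{\mathsf t}(\tilde z_v([\pm1])/\tilde z_h([\pm1]))$, where $\mathsf t$ consists of the colors of $[k-1]$ at its ne, se, sw ends, we can normalize $\tilde z_h([k])$ to a unit by (\ref{eq:zh-zh}). Then
\[
z_k=z_{k-1}+\frac{1-t^{(k-1)}_{\rm sw}}{1-(t^{(k-1)}_{\rm se})^{-1}}\,z_1'+\frac{(t^{(k-1)}_{\rm ne})^{-1}-t^{(k-1)}_{\rm se}}{1-t^{(k-1)}_{\rm se}}\,z_{k-1}z_1',
\]
where $z_1'=\tilde z_v([1])=-\alpha([1]^{\rm se})$ by (\ref{eq:z'(pm1)}), taken with the appropriate local color. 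We then verify by induction, separately for $k$ odd and $k$ even, that the stated closed forms satisfy this recursion. This reduces to the identities $[h+1]_a=1+a[h]_a$ and $[h]_a=[h+1]_a-a^{h}$, together with tracking how $t_{\rm ne},t_{\rm se},t_{\rm sw}$ of $[k-1]$ relate to those of $[k]$. These colors are constrained by $t_{\rm nw}t_{\rm ne}t_{\rm se}t_{\rm sw}=1$ and by the parity of $k$, which determines which ends lie on the same strand. Negative $k$ is handled the same way using $[-1]$, and it matches the convention $[k]_a=-a^k[|k|]_a$. Equality is only required up to a unit $\doteq$, which absorbs the normalization of $\tilde z_h$.

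Part (ii) then follows from (i) by symmetry rather than by recomputation. The reflection $\sigma$ sends $[k]$ to $[1/k]$ and inverts colors, with ne, se, sw of $[1/k]$ corresponding to the inverses of sw, se, ne. The analogue of Lemma \ref{lem:reflection} for $\tilde z$ holds because $\sigma$ transposes the roles of (v) and (h), i.e. of $N_v$ and $N_h$, and the recursion of Lemma \ref{lem:composition} is equivariant in exactly the way checked in Lemma \ref{lem:reflection}. Substituting $t_{\rm ne}=r_{\rm sw}^{-1}$ and $t_{\rm se}=r_{\rm se}^{-1}$ into (i) gives (ii) verbatim.

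The main obstacle is the bookkeeping of colors in the induction step of (i). We must identify, for each parity, which end colors of $[k-1]$ enter $\mathsf t$, keeping orientations consistent with the crossing sign conventions (\ref{eq:[1]}) and (\ref{eq:[-1]}), and then confirm that the unit ambiguity is harmless. We would first check the cases $k=1,2,3$ explicitly against the formula to pin down these conventions, and only then run the general induction.
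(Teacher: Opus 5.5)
Your argument for (i) is the paper's: you apply the $\circ_{\mathsf t}$-recursion of Lemma~\ref{lem:composition} to the ratio $\tilde z_v/\tilde z_h$, with $\tilde z_h([k])$ a unit by (\ref{eq:zh-zh}). The only difference is bookkeeping. The paper uses $[2h]=[2]+[2h-2]$ and $[2h+1]=[2h]+[1]$, while you always split off a single $[\pm1]$.

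Your splitting works. At each step the ne and se colors swap, and $t_{\rm sw}$ of $[k-1]$ equals $t_{\rm se}^{-1}$ or $t_{\rm ne}^{-1}$ according as $k-1$ is even or odd. Let $x,y$ be the ne, se colors of $[k]$, and evaluate $c_{k-1}$ at the colors of $[k-1]$ (ne $=y$, se $=x$). The two steps then read $c_{2h-1}=\frac{x(y-1)}{1-x}\,c_{2h-2}-y$ and $c_{2h}=\frac{x(y-1)}{1-x}\,(c_{2h-1}+1)$. Your identity $[h]_a=1+a[h-1]_a$ closes the induction. For negative $k$, what enters the recursion is the color-independent ratio $\tilde z_v([-1])/\tilde z_h([-1])=-1/b^{\rm sw}_{[-1]}=-1$, whatever unit $\tilde z_h([-1])$ itself turns out to be.

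For (ii) you take a genuinely different route. The paper only says the computation is parallel, i.e.\ redone with vertical composites and $\ast_{\mathsf t}$. You instead transport (i) through $\sigma$. This saves a second computation and explains why (ii) is exactly (i) under $t_{\rm ne}\mapsto r_{\rm sw}^{-1}$, $t_{\rm se}\mapsto r_{\rm se}^{-1}$.

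The cost is that you need a $\tilde z$-version of Lemma~\ref{lem:reflection}. The paper proves that lemma only for the recursively defined $z$, and you assert the $\tilde z$-version rather than check it. The check is short:
\begin{itemize}
\item Give $T^\sigma$ the image orientation. The planar reflection changes the sign of every crossing, while $\alpha^\sigma$ inverts the color of its over-arc.
\item Hence $\tau_{\overline i}^{\epsilon}$ is unchanged, and so is every row of (\ref{eq:QT}); the incoming and outgoing under-arcs keep their roles.
\item Since $\sigma$ fixes nw and se and swaps ne and sw, this gives $N_h^{\sigma(T)}=N_v^T$.
\item By uniqueness, $U_h^{\sigma(T)}=U_v^T$, so $\tilde z_h(\sigma(T))=\tilde z_v(T)$, and symmetrically $\tilde z_v(\sigma(T))=\tilde z_h(T)$.
\end{itemize}
Your other justification, equivariance of the recursions in Lemma~\ref{lem:composition}, controls only the ratio. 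To pass to $\tilde z_h([1/k])$ itself along that route, you would also need that $\tilde z_v([1/k])$ is a unit, via (\ref{eq:zv-zv}).
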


\begin{proof}
We only prove (i). The proof of (ii) is parallel.

Note that by definition, $\tilde{z}_v([k])$ is completely determined by $k$.

For $[2]=T_1+T_2$, with $T_1=T_2=[1]$, let $t_1=\alpha([2]^{\rm ne})=\alpha(T_1^{\rm se})$, and $t_2=\alpha([2]^{\rm se})=\alpha(T_1^{\rm ne})$.
By (\ref{eq:z'(pm1)}), $\tilde{z}_v(T_1)=-t_1$, $\tilde{z}_v(T_2)=-t_2$,
hence
$$\tilde{z}_v([2])=\tilde{z}_v(T_1)+\frac{1-t_2^{-1}}{1-t_1^{-1}}\tilde{z}_v(T_2)+\frac{t_2^{-1}-t_1}{1-t_1}\tilde{z}_v(T_1)\tilde{z}_v(T_2)
=t_1(t_2-1).$$
For $h\ge 2$ and $[2h]=T_1+T_2$, with $T_1=[2]$ and $T_2=[2h-2]$, let $t_1=\alpha([2h]^{\rm ne})=\alpha(T_1^{\rm ne})$, and
$t_2=\alpha([2h]^{\rm se})=\alpha(T_1^{\rm se})$. Then
\begin{align}
\tilde{z}_v([2h])&=\tilde{z}_v([2])+\tilde{z}_v([2h-2])+\frac{t_1^{-1}-t_2}{1-t_2}\tilde{z}_v([2])\tilde{z}_v([2h-2]) \nonumber  \\
&=\tilde{z}_v([2])+t_1t_2\tilde{z}_v([2h-2]),   \label{eq:integer-even-0}
\end{align}
so we can recursively deduce
\begin{align}
\tilde{z}_v([2h])=\frac{(t_1^ht_2^h-1)(t_2-1)}{t_2-t_1^{-1}}=t_1(t_2-1)[h]_{t_1t_2}.  \label{eq:integer-even}
\end{align}

For $\ell\ge 1$ and $[2\ell+1]=T_1+T_2$, with $T_1=[2\ell]$, $T_2=[1]$, let $t_1=\alpha(T_1^{\rm ne})=\alpha([2\ell+1]^{\rm se})$, and
$t_2=\alpha(T_1^{\rm se})=\alpha([2\ell+1]^{\rm ne})$. Then
\begin{align}
\tilde{z}_v([2\ell+1])&=\tilde{z}_v([2\ell])+\tilde{z}_v([1])+\frac{t_1^{-1}-t_2}{1-t_2}\tilde{z}_v([2\ell])\tilde{z}_v([1])  \nonumber  \\
&=t_1(t_2-1)[\ell]_{t_1t_2}-t_1^{\ell+1}t_2^\ell=(1-t_1)[\ell+1]_{t_1t_2}-1.     \label{eq:integer-odd}
\end{align}

For $[-2]=T_1+T_2$, with $T_1=T_2=[-1]$, by (\ref{eq:z'(pm1)}), $\tilde{z}_v(T_1)=\tilde{z}_v(T_2)=-1$, hence with
$t_1=\alpha(T_1^{\rm se})=\alpha([-2]^{\rm ne})$, and $t_2=\alpha(T_1^{\rm ne})=\alpha([-2]^{\rm se})$,
$$\tilde{z}_v([-2])=\tilde{z}_v(T_1)+\frac{1-t_2^{-1}}{1-t_1^{-1}}\tilde{z}_v(T_2)+\frac{t_2^{-1}-t_1}{1-t_1}\tilde{z}_v(T_1)\tilde{z}_v(T_2)
=t_2^{-1}-1.$$
For $h\ge 2$, let $t_1=\alpha([-2h]^{\rm ne})$, $t_2=\alpha([-2h]^{\rm se})$, then similarly as (\ref{eq:integer-even-0}),
$$\tilde{z}_v([-2h])=\tilde{z}_v([-2])+t_1^{-1}t_2^{-1}\tilde{z}_v([-(2h-2)]),$$
so we can recursively deduce
\begin{align*}
\tilde{z}_v([-2h])=\frac{(t_1^{-h}t_2^{-h}-1)(t_2-1)}{t_2-t_1^{-1}}=t_1(t_2-1)[-h]_{t_1t_2}.
\end{align*}
This can be incorporated with (\ref{eq:integer-even}).

For $\ell\ge 1$, let $t_1=\alpha([-2\ell-1]^{\rm se})$, and $t_2=\alpha([-2\ell-1]^{\rm ne})$, then similarly as $\tilde{z}_v([2\ell+1])$, we can obtain
\begin{align*}
\tilde{z}_v([-2\ell-1])&=\tilde{z}_v([-2\ell])+\tilde{z}_v([-1])+\frac{t_1^{-1}-t_2}{1-t_2}\tilde{z}_v([-2\ell])\tilde{z}_v([-1])  \\
&=\frac{(t_1^{-\ell}t_2^{-\ell}-1)(t_2-1)}{t_2-t_1^{-1}}+t_1^{-\ell}t_2^{-\ell}
=(1-t_1)[-\ell]_{t_1t_2}-1,
\end{align*}
which together with (\ref{eq:integer-odd}) verify the formula for $\tilde{z}_v([2h-1])$, $h\in\mathbb{Z}$.
\end{proof}

As a consequence of Lemma \ref{lem:composition} and Lemma \ref{lem:integer}, for generic $T$,
$$[\tilde{z}_v(T):\tilde{z}_h(T)]=[z_v(T):z_h(T)]=z(T),$$
where $z(T)$ is introduced in Section 2.

\subsection{Turning back to $Q_L$}

\begin{proof}[Proof of Theorem \ref{thm:main} in the generic case]
As we have shown, $Q_T$ can be transformed into $N_v^T=UQ_TP$, where $U=U_v^T$, and $P=P_v^T$ is a permutation matrix used to arrange the columns so that the last two columns of $N_v^T$ correspond to $T^{\rm sw}$, $T^{\rm se}$.

Observe that $UQ_L$ can be obtained from $UQ_T$ by deleting the two columns corresponding to $T^{\rm sw}$, $T^{\rm se}$ and
modifying the last two rows
according to $\xi^{\rm nw}=\xi^{\rm sw}$ and $\xi^{\rm ne}=\xi^{\rm se}$, because in $L$, the underlying arcs of $T^{\rm nw}$ and $T^{\rm sw}$ are identified, so are those of $T^{\rm ne}$ and $T^{\rm se}$.
Hence for certain permutation matrix $P'$, in block form we have
$$UQ_LP'=\left(\begin{array}{ccc} \star & \star & -I_{n-2} \\
-b^{\rm sw} & b^{\rm sw} & 0 \\ 1-b^{\rm se} & b^{\rm se}-1 & 0 \end{array}\right),$$
where each $\star$ stands for a column that is irrelevant, and $I_{n-2}$ denotes the $(n-2)\times(n-2)$ identity matrix. Deleting the last row (which stems from the crossing $\mathfrak{c}_{i_0}$) and the first column, we obtain that for certain $j_0$,
\begin{align}
\det\big((Q_L)^{\neg j_0}_{\neg i_0}\big)\doteq \det(U^{-1})\cdot b^{\rm sw}
=\tilde{z}_v(T)b^{\rm sw}\stackrel{(\ref{eq:z-vs-b})}=-\tilde{z}_h(T)\doteq-z_h(T).   \label{eq:det}
\end{align}
Combining this with (\ref{eq:AP}) finishes the proof.
\end{proof}

To deal with the nongeneric case, we use a ``deformation technique".

For $S\in\mathcal{T}_{\rm ar}$, define $f(S)\in\mathbb{Q}\cup\{0,\infty\}$ recursively by $f([p/q])=p/q$ and
$$f(S_1+S_2)=f(S_1)+f(S_2), \qquad    \frac{1}{f(S_1\ast S_2)}=\frac{1}{f(S_1)}+\frac{1}{f(S_2)},$$
with the convention that $\infty+\infty=\infty$, $1/0=\infty$, $1/\infty=0$.

Suppose $f(S)\ne 0,\infty$ for all subtangles $S$ subsequent to $T$.
When all $t_i=-1$, by Remark \ref{rmk:rational}, $z_v(p/q)=p$, $z_h(p/q)=q$; using (\ref{eq:fraction'-v}), (\ref{eq:fraction'-h}) we can recursively show $z_v(S)/z_h(S)=f(S)$. Consequently, $z_v(S)z_h(S)\ne 0$ for each subsequent subtangle $S$ of $T$, i.e. the generic condition is fulfilled.

\begin{figure}[h]
  \centering
  \includegraphics[width=11cm]{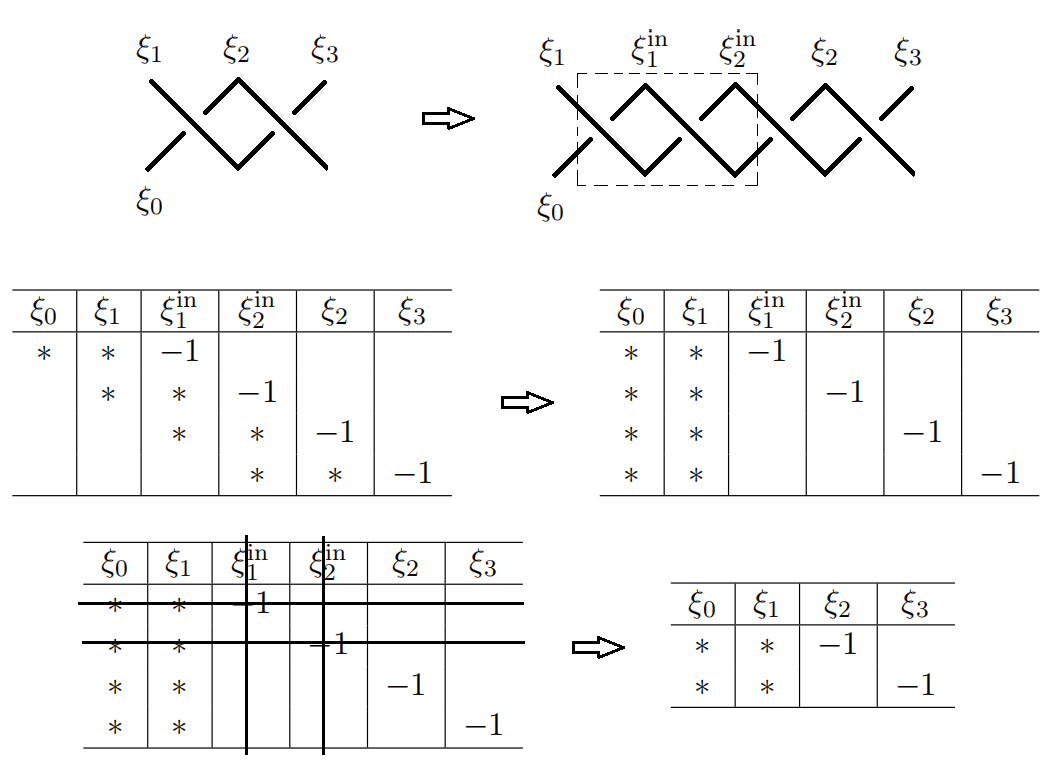}\\
  \caption{Upper: insert $[2a_i]$ (as bounded by the dashed lines) into the integer part of $R_i$; we use $\xi_i$, etc. to label the corresponding arc. Middle: use row transformations to eliminate the elements in each column corresponding to an arc in the inserted tangle $[2a_i]$. Lower: delete the rows and columns containing the $-1$'s, then the determinant does not change.}\label{fig:insert}
\end{figure}

Let $R_1,\ldots,R_d$ be the rational tangles subsequent to $T$.
Given any $\vec{a}=(a_1,\ldots,a_d)\in\mathbb{Z}^d$, insert $[2a_i]$ into the integer part of $R_i$ for each $i$.
Let $T_{\vec{a}}$ denote the resulting tangle, and let $L_{\vec{a}}=D(T_{\vec{a}})$. Then $L_{\vec{a}}=L$ if $\vec{a}=0$.
Modify $Q_{L_{\vec{a}}}$ as follows: for each $i$, use row transformations to reduce the nonzero elements in the column corresponding to each arc in $[2a_i]$, leaving only $-1$, then delete the row and the column containing it.
The case $a_i=1$ is illustrated in Figure \ref{fig:insert}.
Let $V_{\vec{a}}$ denote the resulting matrix, which has the same size as $Q_L$.

Clearly, $T_{\vec{a}}$ is generic for generic $\vec{a}$.
What has been shown is that there exists a unit $\kappa$ independent of $\vec{a}$ such that
$$G(\vec{a}):=\det\big((V_{\vec{a}})^{\neg j_0}_{\neg i_0}\big)+\kappa z_h(T_{\vec{a}})=0$$
for generic $\vec{a}$. From Lemma \ref{lem:integer} and its proof we can see that $G(\vec{a})$ can be regarded as a polynomial in $[a_1]_{s_1},\ldots,[a_d]_{s_d}$, where $s_i$ has the form $t_{j}^{\mu}t_{k}^{\nu}$ for $\mu,\nu\in\{\pm1\}$.
Thus, actually $G(\vec{a})=0$ for all $\vec{a}\in\mathbb{Z}^d$.
The case $\vec{a}=0$ reads
$$\det\big((Q_L)^{\neg j_0}_{\neg i_0}\big)\doteq-z_h(T).$$
This finishes the proof of Theorem \ref{thm:main}.

\section*{Declarations and statements}

{\bf Funding}: No funding was received for conducting this study.

\noindent
{\bf Conflict of interest}: The author has no conflict of interest to declare, 
and has no relevant financial or non-financial interests to disclose.

\noindent
{\bf Data availability statement}: Data sharing not applicable to this article as no data sets were generated or analyzed during the current study.

\bigskip

\noindent
Haimiao Chen (orcid: 0000-0001-8194-1264)\ \ \ \ {\it chenhm@math.pku.edu.cn} \\
Department of Mathematics, Beijing Technology and Business University, \\
Liangxiang Higher Education Park, Fangshan District, Beijing, China.


\begin{thebibliography}{}


\bibitem{Al28}
J.W. Alexander,
Topological invariants of knots and links,
{\it Trans. Amer. Math. Soc.} {\bf 30} (1928), no. 2, 275--306.


\bibitem{BL20}
Y. Bae, I.S. Lee,
On Alexander polynomials of pretzel links,
{\it Kyungpook Math. J.} {\bf 60} (2020), 239--253.


\bibitem{Be25}
Y. Belousov,
Explicit formulas for the Alexander polynomial of pretzel knots,
arXiv: 2502.10370.


\bibitem{BS16}
F. Bonahon and L. Siebenmann,
{\it New geometric splittings of classical knots and the classification and symmetries of arborescent knots},
unpublished manuscript (2016).


\bibitem{Ch21}
H.-M. Chen, Computing twisted Alexander polynomials for Montesinos links,
{\it Indian J. Pure Appl. Math.} {\bf 52} (2021), 584--598.


\bibitem{FP16}
K. Finlinson, J.S. Purcell,
Volumes of Montesinos links,
{\it Pac. J. Math.} 282 (2016), no. 1, 63--105.


\bibitem{Ha79}
R.I. Hartley,
On two-bridged knot polynomials,
{\it J. Austral. Math. Soc. (Ser. A)} {\bf 28} (1979), no. 2, 241--249.


\bibitem{Hi01}
E. Hironaka,
The Lehmer polynomial and pretzel links,
{\it Canad. Math. Bull.} {\bf 44} (2001), no. 4, 440--451.


\bibitem{HM19}
M. Hirasawa, K. Murasugi,
Stable Alexander polynomials of arborescent links,
{\it J. Knot Theory Ramifications} {\bf 28} (2019), no. 13, 1940017, 21 pp.


\bibitem{Ho20}
J. Hoste,
A note on Alexander polynomials of 2-bridge links,
{\it J. Knot Theory Ramifications} {\bf 29} (2020), no. 8, 1971003, 7 pp.



\bibitem{Ka84}
T. Kanenobu,
Alexander polynomials of two-bridge links,
{\it J. Austral. Math. Soc. (Series A)} {\bf 36} (1984), 59--68.


\bibitem{KL07}
D. Kim, J. Lee,
Some invariants of pretzel links,
{\it Bull. Austral. Math. Soc.} {\bf 75} (2007), 253--271.



\bibitem{Li97}
W.B.R. Lickorish,
{\it An introduction to knot theory},
Graduate Texts in Mathematics, vol. {\bf 175}, Springer New York, 1997.



\bibitem{Na86}
Y. Nakagawa,
On the Alexander polynomials of the pretzel links $L(p_1,\ldots,p_n)$,
{\it Kobe J. Math.} {\bf 3} (1986), 167--177.

\end{thebibliography}
\end{document}